\begin{document}
	\newtheorem{theorem}{Theorem}[section]
	\newtheorem{question}{Question}[section]
	\newtheorem{thm}[theorem]{Theorem}
	\newtheorem{lem}[theorem]{Lemma}
	\newtheorem{eg}[theorem]{Example}
	\newtheorem{prop}[theorem]{Proposition}
	\newtheorem{cor}[theorem]{Corollary}
	\newtheorem{rem}[theorem]{Remark}
	\newtheorem{deff}[theorem]{Definition}
	\numberwithin{equation}{section}
	\title{On $S$-Integral Domains and $S$-Version of Krull Intersection Theorem}
	
	\author[1]{Tushar Singh}
	\author[2]{Gyanendra K. Verma}
	\author[3]{Shiv Datt Kumar }

	\affil[1, 3]{\small Department of Mathematics, Motilal Nehru National Institute of Technology Allahabad, Prayagraj 211004, India \vskip0.01in Emails: $^1$ sjstusharsingh0019@gmail.com, tushar.2021rma11@mnnit.ac.in, $^3$sdt@mnnit.ac.in}
	\vskip0.05in
	\affil[2]{\small Department of Mathematical Sciences, UAE University, Al Ain PO Box 15551, United Arab Emirates \vskip0.01in Email: gkvermaiitdmaths@gmail.com}
	\maketitle
	\hrule
	\begin{abstract}
		\noindent
		 Let $S\subseteq R$ be a multiplicatively closed subset of a ring $R$.  We extend several results on integral domains to their $S$-versions and establish the $S$-version of Krull intersection theorem. We also show that if $R$ is an $S$-field, then the localization of $R$ with respect to $S$ is a $\phi(S)$-field, where $\phi(S)=\left \{\dfrac{s}{1}| \ s\in S\right \}$ is a multiplicatively closed subset of $S^{-1}R$, and prove the converse under the condition of finiteness of $S$. As a consequence, we show that every finite $S$-integral domain is an $S$-field. Also, we provide several examples to illustrate the significance of our findings. 
		
	\end{abstract}
	\textbf{Keywords:}	$S$-integral domain, $S$-field, $S$-cancellation property, $S$-Krull intersection theorem.\\
	\textbf{MSC(2020):} 12E20, 13G05, 16U10, 16U40.
	\hrule
	\section{Introduction}
	Throughout, $R$ denotes  a commutative ring with unity, and $S\subseteq R$ denotes a multiplicatively closed subset. The concept of Noetherian modules significantly simplifies the study of ring and module structures. The concepts of $S$-principal ideal rings and $S$-Noetherian rings, which are extensions of principal ideal rings and Noetherian rings, were first presented by Anderson and Dumitrescu \cite{ad02} in 2002. They established $S$-versions of well-known results for Noetherian rings, along with an $S$-version of the Eakin-Nagata theorem. A ring $R$ is defined as an $S$-Noetherian ring if every ideal of $R$ is $S$-finite. This means that for every ideal $I$ in $R$, there exist $s\in S$ and a finitely generated ideal $J$ of $R$ such that $sI\subseteq J\subseteq I$. In $2020$, Sevim et al. \cite{es20} extended the framework of Artinian rings by introducing the concept of $S$-Artinian rings. A ring $R$ is called  \textit{$S$-Artinian} if for every descending chain of ideals $\mathcal{I}_{1}\supseteq \mathcal{I}_{2}\supseteq\cdots\supseteq \mathcal{I}_{n}\supseteq\cdots$ of $R$, there exist $s \in S$ and $k \in \mathbb{N}$ such that $s \mathcal{I}_{k} \subseteq \mathcal{I}_n$ for all $n \geq k$.
	Several authors generalized numerous significant properties of Noetherian and Artinian rings to $S$-variant such as $S$-prime ideals, $S$-strong Mori domains, $S$-Noetherian properties on amalgamated algebras along an ideal, $S$-reduced modules, $S$-cogenerated rings, and $S$-primary decomposition (see \cite{ah18}, \cite{ah20}, \cite{ah22}, \cite{hk14}, \cite{jw14}, \cite{ap20}, \cite{es20}, \cite{ts23}, and \cite{ts24}). Recently, Ersoy et al. \cite{ba24} introduced $S$-version and $S$-generalizations of idempotent elements, pure ideals, and gave $S$-version of Stone type theorems.

	Integral domains are interesting algebraic structures that possess numerous characteristics analogous to those of the ring of integers. A significant characteristic is the absence of zero divisors, which ensures a consistent and precise arithmetic structure. This makes them essential in number theory for the study of divisibility, prime factorization, and Diophantine equations. In algebraic geometry, integral domains are associated with coordinate rings of irreducible varieties, which allow us to analyze geometric objects. In 2020, Yildiz et al. \cite{ed20} introduced the concept of $S$-integral domain with respect to a multiplicatively closed set $S$, which is one of the most interesting generalizations of the integral domains. A ring $R$ is said to be an $S$-integral domain if there exists an $s \in S$ such that for all $a, b \in R$, whenever $ab = 0$, then either $sa = 0$ or $sb = 0$. This approach allows one to generalize the classical integral domain and study algebraic structures where certain elements of the ring are invertible or have special behaviour due to the presence of $S$. The notion of $S$-integral domains is essential in the analysis of the localization of rings and modules and in algebraic geometry.
	By focusing on particular multiplicative closed subsets, we gain greater flexibility in analyzing the structure of rings in various mathematical settings.

	In the first part of the paper, we extend several results from the integral domain to the $S$-integral domain. We present the idea of the $S$-cancellation property in a ring (see Definition \ref{cancel}) and prove that $S$-cancellation property holds in a ring if and only if the ring is an $S$-integral domain (see Proposition \ref{scancellation}). Additionally, we establish that the localization of a ring $R$ with respect to $S$ is an $S$-integral domain if $R$ is an $S$-integral domain; however, the converse is not generally true (see Example \ref{sint}). Under some assumptions, we prove the converse. In an integral domain, the only idempotent elements are $0$ and $1$; however, an $S$-integral domain can have additional $S$-idempotent elements (see Example \ref{sidem}). Moreover, we give sufficient conditions on an integral domain to be an $S$-Noetherian ring (see Theorem \ref{srad}). 
	
	Fields are important algebraic structures that serve as a domain for exploration in various areas, including polynomial theory, algebraic geometry, coding theory, and cryptography. In the second part of this paper, we examine the concept of $S$-fields, which extend the idea of fields as introduced by E. Yildiz et al. \cite{ed20} in 2020. Precisely, a ring $R$ is called an $S$-field if its zero ideal is an $S$-maximal ideal (see Definition \ref{sfield}).  Extending the concept of modules over rings to modules over $S$-fields is an interesting and important problem. Such a generalization naturally encompasses vector spaces as a special case, thereby enriching and extending the classical framework. In doing so, it provides a foundation for the systematic analysis of algebraic structures within commutative algebra from a new perspective. Moreover, this extension is not merely of theoretical interest; it opens promising directions for applied research, particularly in areas such as coding theory, cryptography, and related areas where the interplay between vector spaces and field theory plays a foundational role.    
    In this work, we extend several fundamental properties of fields to $S$-fields and give a characterization of $S$-fields. We define $S$-proper ideals of a ring (see Definition \ref{sprop}) and prove that a ring is an $S$-field if and only if it has no $S$-proper ideal. We also provide the characterization of $S$-maximal and $S$-prime ideals of a ring in connection with $S$-fields (see Proposition \ref{mfield} and  Theorem \ref{uni}). Additionally, we investigate the localization of a ring with respect to a multiplicatively closed set $S$ and determine the conditions under which the localization of a ring becomes a $\phi(S)$-field. As a corollary, we show that every finite $S$-integral domain is an $S$-field. 
	The Krull intersection theorem is a cornerstone in the theory of commutative Noetherian rings, providing deep insight into the structure and stability of ideals under infinite intersections. Building upon this classical framework, we present an $S$-analogue of this classical result, which we call the $S$-Krull intersection theorem (see Theorem \ref{krull}).

\section{On $S$-integral domains}
In this section, we first provide an example of an $S$-integral domain which is not an integral domain and demonstrate that if $R$ is an $S$-integral domain, then the localization $S^{-1}R$ is an integral domain. However, the converse does not necessarily hold. We characterize the $S$-idempotent and $S$-nilpotent elements in $S$-integral domains. Then we prove that if $R$ is an integral domain and each non-$S$-radical ideal is $S$-finite, then $R$ is an $S$-Noetherian ring.
	\noindent
	\begin{deff}$\cite{ed20}$\label{sintegral}
		A ring $R$ is said to be an $S$-integral domain if there exists $s \in S$ such that for all $a, b \in R$, whenever $ab = 0$, then either $sa = 0$ or $sb = 0$.
	\end{deff}
	\noindent
	Let $S_1\subseteq S_2$ be two multiplicatively closed subsets of $R$. Then every $S_1$-integral domain is also an $S_2$-integral domain. Following \cite{ed20}, we observe that every integral domain is an $S$-integral domain but the converse does not necessarily hold.
	
	\begin{eg}\label{sin}
		Take $R = \mathbb{Z}_{6}$, $S = \{\bar{1}, \bar{2}, \bar{4}\}$. Then $R$ is an $S$-integral domain but not an integral domain. To see this, let $a, b \in R \setminus\{\bar{0}\}$ with $ab = \bar{0}$. Then either $a = \bar{2}$ or $\bar{4}$ and $b = \bar{3}$. Setting $s = \bar{2} \in S$, we have $sb = \bar{0}$. Therefore, $R$ is an $S$-integral domain.
	\end{eg}
	\begin{eg}
		Let $R=\mathbb{Z}_{30}$, ~$S=\{\bar{1},\bar{2},\bar{4},\bar{8},\overline{16}\}$. It is clear that $R$ is not an $S$-integral domain, since $\bar{5} \cdot \bar{6} = \bar{0}$, but for any $s \in S$, neither $s \cdot \bar{5} = \bar{0}$ nor $s \cdot \bar{6} = \bar{0}$.
	\end{eg}
	
	\begin{deff}\label{cancel}
		A ring $R$ is said to have the $S$-cancellation property if for all $a,b,c\in R$ with $sa\neq 0$ for every $s\in S$ and $ab=ac$, then $sb=sc$ for some $s\in S$.
	\end{deff}
	With this definition, we can extend the cancellation property of the integral domain for the $S$-integral domain. In the following examples, we describe the $S$-cancellation property.
	\begin{eg}
		Let $R=\mathbb{Z}_{12}$, $S=\{\bar{1},\bar{3},\bar{9}\}$. For $a=\bar{2}$, observe that $sa\neq 0$ for all $s\in S$. Take $b=\bar{4}$ and $c=\overline{10}$ in $R$, then $ab=ac=\bar{8}$. Note that $sb\neq sc$ for all $s\in S$. Therefore $R$ does not have the $S$-cancellation property. 
	\end{eg}
	\begin{eg}
		Consider $R=\mathbb{Z}_{15}$, $S=\{\bar{1}, ~\bar{6}\}$. For $a, b, c\in R$, if $ab=ac$ and $sa\neq 0$ for all $s\in S$, then $sb=sc$ for $s=\bar{6}\in S$. In particular, for $a=\bar{3}, b=\bar{7}$ and $c=\bar{2}$, we have $sa\neq 0$ for all $s\in S$ and $ab=ac=\bar{6}$. This implies that $sb=sc=\overline{12}$ for $s=\bar{6}$. 
	\end{eg}
	\begin{prop}\label{scancellation}
		A ring $R$ is an $S$-integral domain if and only if the $S$-cancellation property holds in $R$.
	\end{prop}
	\begin{proof}
		Suppose $R$ is an $S$-integral domain. Let $a,b,c\in R$ be such that $sa\neq0$ for all $s\in S$ and $ab=ac$. Then, by definition of the $S$-integral domain, $sa=0$ or $s(b-c)=0$ for some $s\in S$. Also, $sa\neq 0 $ for all $s\in S$. Therefore $s(b-c)=0,$ i.e., $sb=sc$. Hence the $S$-cancellation property holds in $R.$ 
		
		Conversely, let $ab=0$. If $sa=0$ for some $s\in S,$ we are done. If $sa\neq 0$ for all $s\in S$, then $ab=0=a\cdot 0$, then by the definition of $S$-cancellation, $sb=s\cdot 0=0$ for some $s\in S$. Hence $R$ is an $S$-integral domain.
	\end{proof}
	
	Recall from \cite{ah20}, an ideal $P$ (which is disjoint with $S$) of $R$ is called an \textit{$S$-prime} ideal if there exists an $s \in S$ such that for any $a, b \in R$ with $ab \in P$, we have either $sa \in P$ or $sb \in P$. It is clear that every prime ideal is an $S$-prime, but converse does not hold in general (see \cite[Example 1(3)]{ah20}). 
	
	\begin{lem}$\cite{au24}$\label{int}
		A ring $R$ is an $S$-integral domain if and only if $(0)$ is an $S$-prime ideal.
	\end{lem}
	
	\begin{prop}$\cite{au24}$\label{sint}
		Let $P$ be an ideal of $R$ disjoint from $S$. Then $P$ is an $S$-prime ideal of $R$ if and only if $R/P$ is an $\overline{S}$-integral domain, where $\overline{S}=\{s+P\hspace{0.1cm}|\hspace{0.1cm}s\in S\}$ is a multiplicatively closed subset of $R/P$.
	\end{prop}
	
	Recall that $R$ is an integral domain if and only if $R[X]$ is an integral domain. We extend this result for $S$-Integral domain.
	\begin{prop}
		$R[X]$ is an $S$-integral domain if and only if $R$ is an $S$-integral domain.
	\end{prop}
	\begin{proof}
		Let $P$ be an ideal of $R$ which is disjoint with $S$, the fact that $(P[X]:_{R[X]} s)=(P:_R s)[X]$ and $R[X] / (P:_R s)[X] \cong (R/(P:_R s))[X]$ for some $s\in S$. According to \cite[Proposition 1]{ah20}, $P[X]$ is an $S$-prime ideal of $R[X]$ if and only if $(P[X]: _{R[X]} s)=(P:_R s)[X]$ is a prime ideal of $R[X]$, if and only if $(P:_R s)$ is a prime ideal of $R$ if and only if $P$ is an $S$-prime ideal of $R$. In particular if $P=(0)$, then $(0)$ is an $S$-prime ideal of $R$ if and only if $(0)$ is an $S$-prime ideal of $R[X]$. Thus $R[X]$ is an $S$-integral domain if and only if $R$ is $S$-integral domain.
	\end{proof}
	\begin{prop}\label{local}
		If a ring $R$ is an $S$-integral domain, then $S^{-1}R$ is an integral domain.
	\end{prop}
	\begin{proof}
		Suppose $R$ is an $S$-integral domain. Let $\dfrac{\alpha}{u_1}, \dfrac{\beta}{u_2} \in S^{-1}R$ be such that $\dfrac{\alpha}{u_1} \cdot \dfrac{\beta}{u_2} = 0$, where $\alpha, \beta \in R$ and $u_1, u_2 \in S$. Then there exists $s \in S$ such that $s\alpha\beta = 0$ in $R$. Since $R$ is an $S$-integral domain, there exists $u \in S$ such that either $us\alpha = 0$ or $u\beta = 0$. If $us\alpha= 0$, then $\dfrac{\alpha}{u_1} = \dfrac{us\alpha}{us u_1} = 0$. Similarly, if $u\beta = 0$, then $\dfrac{\beta}{u_2} = \dfrac{u\beta}{uu_2} = 0$. Therefore $S^{-1}R$ is an integral domain.
	\end{proof}
	
	Recall from \cite{ad09}, let $M$ be an $R$-module. The  idealization of $R$-module $M$, $R(+)M=\{(r, m)\mid r\in R, m\in M\}$ is a commutative ring with component wise addition and the multiplication defined by $(\alpha_1, m_1)(\alpha_2, m_2)= (\alpha_1\alpha_2, \alpha_1m_2 + \alpha_2m_1)$ for all $\alpha_1, \alpha_2 \in R$ and $m_1, m_2 \in M$. It is straightforward to verify that $S(+) M = \{(s, m) \mid s \in S, m \in M\}$ forms a multiplicative closed set in $R(+)M$.
	
	The following example shows that even if the localization $S^{-1}R$ is an integral domain, it does not guarantee that $R$ is an $S$-integral domain. 
	
	\begin{eg}\label{nsint}
		Let $\mathbb{E}=\bigoplus_{p\in\mathcal{P}}\mathbb{Z}/p\mathbb{Z}$, where $\mathcal{P}$ is the set of all prime numbers. Define $R=\mathbb{Z} (+) \mathbb{E}$ and $S=(\mathbb{Z}\setminus\{0\}) (+)(0)$. According to \cite[Example 3.12]{ki24}, the localization 
	$S^{-1}R=\mathbb{Q}$, which is a field and therefore is an integral domain. Contrary, suppose that $R$ is an $S$-integral domain, $(0)$ is an $S$-prime ideal of $R$. Consequently, there exists $s\in S$ such that for each $\alpha, \beta\in R$ if $\alpha\beta=0$, then either $s\alpha=0$ or $s\beta=0$. Take $s=(n, 0)$. Since for every $q\in \mathbb{E}$, $(0, q)^{2}= (0, 0)$, then there exists $(n, 0)\in S$ such that $(n, 0)(0, q)=(0,0)$. It follows that $nq=0$, for $n\in\mathbb{Z}\setminus\{0\}$. This implies $n\mathbb{E}=0$ which is not possible. Therefore $R$ is not an $S$-integral domain.
	\end{eg}
	\begin{theorem}\label{domain} Suppose  $S$ is finite subset of a ring $R$. Then
		 $R$ is an $S$-integral domain if and only if $S^{-1}R$ is an integral domain.
	\end{theorem}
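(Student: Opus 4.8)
The forward implication is exactly Proposition~\ref{local}, and it is worth noting that that direction does not use the finiteness of $S$ at all. So the plan is to concentrate entirely on the converse: assuming $S^{-1}R$ is an integral domain and $S$ is finite, I want to exhibit a single element $s\in S$ witnessing the $S$-integral domain property for \emph{every} zero-product in $R$.

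First I would record the elementary consequence of $S^{-1}R$ being a domain. If $a,b\in R$ satisfy $ab=0$, then $\frac{a}{1}\cdot\frac{b}{1}=\frac{ab}{1}=0$ in $S^{-1}R$, so $\frac{a}{1}=0$ or $\frac{b}{1}=0$; by the definition of equality of fractions, this says there is some $u\in S$ with $ua=0$, or some $u\in S$ with $ub=0$. The difficulty is that this $u$ is allowed to depend on the pair $(a,b)$, whereas Definition~\ref{sintegral} demands one fixed $s$ that simultaneously handles all such pairs. This uniformization is exactly where finiteness of $S$ is used.

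The remedy is to set $s:=\prod_{x\in S}x$, which is a well-defined element of $S$ because $S$ is finite and multiplicatively closed. Given $ab=0$, choose $u\in S$ as above, say $ua=0$. Since $R$ is commutative and $u\in S$, we may factor $s=u\cdot s'$ with $s'=\prod_{x\in S\setminus\{u\}}x\in R$ (interpreting $s'=1$ if $S=\{u\}$), and therefore $sa=s'(ua)=0$. By the symmetric argument, if $ub=0$ then $sb=0$. Hence this single $s$ works for all pairs, so $R$ is an $S$-integral domain, and combining with Proposition~\ref{local} gives the equivalence.

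I expect the only genuine subtlety to be precisely this merging of witnesses, and it is also the place where the hypothesis is essential: the domain property of $S^{-1}R$ is inherently ``pointwise'' in $(a,b)$, and without a bound on $S$ there is no way to collapse the possibly infinitely many elements $u$ into one — indeed Example~\ref{nsint} shows the implication fails when $S$ is infinite. Everything else amounts to routine bookkeeping with fractions and the definitions.
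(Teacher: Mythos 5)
Your proposal is correct and follows essentially the same route as the paper: the forward direction is delegated to Proposition~\ref{local}, and for the converse both arguments extract a pair-dependent witness $u\in S$ from the vanishing of $\frac{a}{1}$ or $\frac{b}{1}$ in the domain $S^{-1}R$ and then uniformize it by taking the product of all elements of the finite set $S$. Your remark that this uniformization is exactly where finiteness enters (and that Example~\ref{nsint} shows it cannot be dispensed with) is a correct and slightly more explicit account of the same step the paper performs.
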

	\begin{proof}
		If $R$ is an $S$-integral domain, then $S^{-1}R$ is an integral domain,  follows from Proposition \ref{local}. Conversely, let $a, b \in R$ be such that $ab=0$. Consequently, $\dfrac{ab}{s's''} = \dfrac{a}{s'}\cdot\dfrac{b}{s''} \in S^{-1}R$, where $s', s'' \in S$. Then $\dfrac{a}{s'} = 0$ or $\dfrac{b}{s''} = 0$ since $S^{-1}R$ is an integral domain. If $\dfrac{a}{s'}= 0$, then there exists $u_1 \in S$ such that $u_1a = 0$. Further, if $\dfrac{b}{s''} = 0$, then there exists $u_2 \in S$ such that $u_2b = 0$. Since $S$ is finite, define $u = \prod_{t\in S}t $, then $ua = 0$ or $ub = 0$. Hence $R$ is an $S$-integral domain. 
	\end{proof}
	
	\begin{eg}
		Consider $R=\mathbb{Z}\times \mathbb{Q}$, ~$S=\{(1,1),~ (0, 1)\}$. Evidently, $R$ is not an integral domain. Let $\alpha, ~\beta \in R$ such that $\alpha=(x, u)$ and $\beta=(y, v)$, where $x, y\in \mathbb{Z}$ and $u, v\in\mathbb{Q}$. Now, if $\alpha\cdot\beta=(xy, uv)=(0, 0)$, then it follows that $xy=0$ and $uv=0$. This implies that either $x=0$ or $y=0$ and either $u=0$ or $v=0$ since $\mathbb{Z}$ and $\mathbb{Q}$ are integral domains. Therefore there are two possibilities for $\alpha$ and $\beta$, if $\alpha=(x, 0)$, then $\beta=(0, v)$. Further, if $\alpha=(0, u)$, then $\beta=(y, 0)$. Take $s=(0,1)\in S$. Then either $s\alpha = 0$ or $s\beta = 0$. Thus $R$ is an $S$-integral domain, and hence by Theorem \ref{domain}, $S^{-1}R$ is an integral domain.
	\end{eg}

In $2024$, Ersoy et al. \cite{ba24}, introduced the concept of $S$-idempotent elements of a ring $R$. In this work, we identify the class of $S$-idempotent elements in an $S$-integral domain.
	\begin{deff}$\cite{ba24}$
		An element $a \in R$ is called \textit{$S$-idempotent} if $a^2 = s \cdot a$ for some $s \in S$. 
	\end{deff}
	Observe that every idempotent element is $S$-idempotent, but converse may not be true. 
	\begin{eg}\label{sidem}
		Let $R$ be any ring. We take $R\times \mathbb{Z}=\{(r, ~m)\mid r\in R, ~m\in \mathbb{Z}\}$. Then define addition coordinate wise and multiplication as follows: 
		$$(r, ~m)(s, ~n)=(rs+sm+nr, ~mn).$$ It is easy to verify that $R\times \mathbb{Z}$ is a commutative ring with identity $(0, ~1)$. Consider $S=\{0\}\times\mathbb{Z}^{*}$, where $\mathbb{Z}^{*}=\mathbb{Z}\setminus\{0\}$. Let $(0, ~a)\in R\times \mathbb{Z}$, where $a\in\mathbb{Z}\setminus\{0, 1\}$. Then $(0, ~a)^{2}=(0, ~a^{2})\neq (0, ~a)$ for $a^{2}\neq a$, that is, $(0, ~a)$ is not an idempotent element, but $(0, ~a)^{2}=s (0, ~a)$ for $s=(0, ~a)\in S$. Therefore $(0, ~a)$ is an $S$-idempotent element of~$R\times\mathbb{Z}$.
	\end{eg}
	\begin{deff}$\cite{ba24}$
		Let $y$ be an element of a ring $R$. Then we have the following:
		\begin{enumerate}
			\item If $sy^{n}=0$ for some $s\in S$ and $n\in \mathbb{N}$, then $y$ is called $S$-nilpotent.
			\item If $sy = 0$ for some $s\in S$, then $y$ is called $S$-zero. Note that $0$ is also an $S$-zero element.
		\end{enumerate}
	\end{deff}
    
    \begin{deff}
    An element $a \in R$ is said to be \emph{$S$-non-zero} if $sa \neq 0$ for all $s \in S$.
    \end{deff}
    
	\begin{deff}$\cite{ap20}$
		A ring $R$ is said to be $S$-reduced, if $r^{n}=0$, where $r \in R$, and $n \in\mathbb{N}$, then there exists $s\in S$ such that $sr= 0$.
	\end{deff}
	Notion of $S$-reduced ring \cite{ap20}, which is a proper generalization of reduced ring was first presented by A. Pekin et al. in $2020$. The $S$-version results for the $S$-integral domain is shown below.
	\begin{theorem}\label{si}
		Let $R$ be an $S$-integral domain. Then we have the following:
		\begin{enumerate}
			\item $R$ does not possess any $S$-non-zero $S$-nilpotent element.
			\item If $S\cap Z(R)=\emptyset$, then non-zero $S$-idempotent elements of $R$ must be elements of $S$. 
			\item $R$ is an $S$-reduced ring.
		\end{enumerate}
	\end{theorem}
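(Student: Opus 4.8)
The unifying idea is to fix once and for all an element $s_0 \in S$ witnessing that $R$ is an $S$-integral domain --- equivalently, by Lemma \ref{int}, witnessing that $(0)$ is an $S$-prime ideal --- and then to feed suitable factorizations $ab = 0$ into the resulting implication ``$s_0 a = 0$ or $s_0 b = 0$''. All three parts are short applications of this single implication.

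For (1), I would prove the (contrapositive) statement that every $S$-nilpotent element of $R$ is in fact $S$-zero, by induction on the exponent $n$ for which $s y^{n} = 0$ with $s \in S$. The base case $n = 1$ is immediate. For $n \ge 2$, apply the $S$-integral domain property to the product $(s y^{n-1}) \cdot y = 0$: either $s_0 y = 0$, so that $y$ is $S$-zero, or $s_0 s y^{n-1} = 0$, so that $y$ is $S$-nilpotent of exponent $n-1$ with the new witness $s_0 s \in S$, and the inductive hypothesis finishes the argument. Consequently no $S$-non-zero element can be $S$-nilpotent. Part (3) is then essentially a corollary: if $r^{n} = 0$, then $1 \cdot r^{n} = 0$ exhibits $r$ as an $S$-nilpotent element, so by (1) there is $s \in S$ with $sr = 0$, which is precisely the defining condition for $R$ to be $S$-reduced.

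For (2), let $a \ne 0$ be an $S$-idempotent element, say $a^{2} = s_1 a$ with $s_1 \in S$. Rewriting this as $a(a - s_1) = 0$ and applying the $S$-integral domain property yields $s_0 a = 0$ or $s_0(a - s_1) = 0$. The hypothesis $S \cap Z(R) = \emptyset$ says that every element of $S$, in particular $s_0$, is a non-zero-divisor; hence $s_0 a = 0$ together with $a \ne 0$ is impossible, forcing $s_0(a - s_1) = 0$, and cancelling the regular element $s_0$ gives $a = s_1 \in S$.

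The proof is not deep, being in each case a one-step use of the defining implication together with at most a routine induction. The one place that requires genuine care --- and the step I would flag as the main (mild) obstacle --- is part (2): one must check that the hypothesis $S \cap Z(R) = \emptyset$ is exactly what is needed both to discard the branch $s_0 a = 0$ and to cancel $s_0$ in the surviving branch, while in part (1) one must set up the induction so that the updated witness $s_0 s$ is still seen to lie in $S$.
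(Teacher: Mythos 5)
Your proof is correct and follows essentially the same route as the paper: fix the witness $s_0\in S$ from the definition of an $S$-integral domain and apply the implication ``$ab=0\Rightarrow s_0a=0$ or $s_0b=0$'' to the factorizations $(sy^{n-1})\cdot y=0$ and $a(a-s_1)=0$, using $S\cap Z(R)=\emptyset$ in part (2) to discard one branch and cancel $s_0$ in the other. The only difference is that you spell out the induction on the exponent in part (1), a step the paper compresses into a single assertion; this is a clarification rather than a divergence.
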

	\begin{proof}
		\leavevmode
		\begin{enumerate}
			\item Suppose there exists $r\in R$ such that $sr\neq 0$ for all $s\in S$, i.e. $r$ is an $S$-non-zero and it is an $S$-nilpotent. Then there exists $t\in S$ such that $tr^{n}=0$ for some $n\in\mathbb{N}$. Since $R$ is an $S$-integral domain, there exists $s'\in S$ such that $s'r=0$, a contradiction, as $sr\neq 0$ for all $s\in S$. Hence the result.
			\item Let $a\in R$ be $S$-idempotent, so that $a^{2}=sa$ for some $s\in S$. This implies that $a(a-s)=0$, then there exists $t\in S$ such that either $ta=0$ or $t(a-s) =0$, since $R$ is an $S$-integral domain. Consequently, either $a=0$ or $a=s$ since $S\cap Z(R)=\emptyset $, as desired.
			\item Suppose $R$ is an $S$-integral domain. Let $a\in R$, and $a^{n}=0$ for some $n\in\mathbb{N}$. Then there exists $s'\in S$ such that $s'a=0$ since $R$ is an $S$-integral domain. Therefore $R$ is an $S$-reduced ring. 
		\end{enumerate}
	\end{proof}
	
	Note that the converse may not be true for Theorem \ref{si}(3).
	\begin{eg}
		Consider a ring $R=\mathbb{Z}_{p}\times \mathbb{Z}_{p}\times\cdots\times\mathbb{Z}_{p}\times\cdots$ (countably infinite copies of $\mathbb{Z}_{p})$, where $p$ is prime. Then $R$ is an $S$-reduced ring, for $R$ has no non-zero nilpotent elements. Consider a multiplicative set $S=\{{1_R}=(\bar{1},\bar{1},\bar{1},\ldots), s=(\bar{1},\bar{1},0,\ldots)\}$. Let $a=(\bar{1}, 0, \bar{1}, 0, \ldots)$ and ~$b=(0, \bar{1}, 0, \bar{1}, \ldots)\in R$. Then $ab=(\bar{0}, \bar{0}, \ldots)$, but neither $sa=(\bar{0}, \bar{0}, \ldots)$ nor $sb=(\bar{0}, \bar{0}, \ldots)$ for all $s\in S$. Therefore $R$ is not an $S$-integral domain. 
	\end{eg}

	In general, an integral domain need not be an $S$-Noetherian ring. We now present an example of an integral domain that is not $S$-Noetherian.
\begin{eg}\label{sn}
		Let $R=\mathbb{Z}+X\mathbb{Q}[X]$ and $S=\{+1, -1\}$ be multiplicatively closed set of $R$. Clearly, $R$ is an integral domain. Consider \(R[Y]\) which is not Noetherian so is not $S$-Noetherian.
\end{eg}
	This leads to a natural question:
	\begin{question}
		When is an integral domain, an $S$-Noetherian ring?
	\end{question}
	The answer to the above question is given in Theorem \ref{srad} using $S$-radical ideals.
	
	\begin{deff}\cite[Definition 2.1]{ut25}
		Let $S\subseteq R$ be a multiplicatively closed set, and $I$ be an ideal of $R$. Then $S$-radical of $I$ is defined by
		$$\sqrt[S]{I} = \{a \in R \mid sa^n \in I \text{ for some } s \in S \text{ and } n \in \mathbb{N}\}.
		$$ Also, if $\sqrt[S]{I} =I$, then $I$ is said to be an $S$-radical ideal of $R$.
	\end{deff}
	\begin{theorem}\label{srad}
		If every non-$S$-radical ideal of an integral domain $R$ is $S$-finite, then $R$ is an $S$-Noetherian ring.
	\end{theorem}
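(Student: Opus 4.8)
The plan is to reduce the statement to the case of $S$-radical ideals. Since every non-$S$-radical ideal of $R$ is $S$-finite by hypothesis, it suffices to prove that every $S$-radical ideal of $R$ is $S$-finite as well; then every ideal of $R$ is $S$-finite and $R$ is $S$-Noetherian by definition. The ideals $(0)$ and $R$ are finitely generated, hence $S$-finite, so I only need to treat a proper nonzero $S$-radical ideal $I$. I will argue by contradiction: assuming $I$ is \emph{not} $S$-finite, I will manufacture from $I$ an ideal that is neither $S$-finite nor $S$-radical, contradicting the hypothesis.

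The ideal I would use is $aI$, where $a$ is any fixed nonzero element of $I$ (available since $I \neq (0)$). First, $aI$ is not $S$-radical: since $a \in I$ we have $a^{2} \in aI$, so $a \in \sqrt[S]{aI}$; but $a \notin aI$, for $a = ay$ with $y \in I$ would force $1 = y \in I$ after cancelling $a$ in the domain $R$, contradicting $I \neq R$. Hence $aI \subsetneq \sqrt[S]{aI}$, so $aI$ is a non-$S$-radical ideal of $R$.

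It then remains to show that $aI$ is not $S$-finite, and this is the step where the integral-domain hypothesis is essential. If $aI$ were $S$-finite, there would be $s \in S$ and a finitely generated ideal $J$ with $s(aI) \subseteq J \subseteq aI$. Writing $J = (x_{1}, \dots, x_{k})$ with each $x_{i} \in aI$, say $x_{i} = ay_{i}$ with $y_{i} \in I$, put $J' = (y_{1}, \dots, y_{k}) \subseteq I$, so that $J = aJ'$. Then for every $z \in I$ the relation $a(sz) = s(az) \in s(aI) \subseteq aJ'$ lets us cancel $a$ (as $a \neq 0$ and $R$ is a domain) to obtain $sz \in J'$; thus $sI \subseteq J' \subseteq I$ with $J'$ finitely generated, i.e. $I$ is $S$-finite — a contradiction. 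Therefore $aI$ is a non-$S$-finite, non-$S$-radical ideal, which contradicts the hypothesis. Consequently every proper nonzero ideal of $R$ is $S$-finite, and together with the trivial cases $(0)$ and $R$ this shows that $R$ is $S$-Noetherian.

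I expect the only real obstacle to be spotting the right auxiliary ideal: once one multiplies $I$ by a nonzero element of itself, both the failure of $S$-radicality (from $a^{2} \in aI$ but $a \notin aI$) and the transfer of non-$S$-finiteness from $aI$ back to $I$ (by cancellation in the domain) are routine.
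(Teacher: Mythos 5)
Your proposal is correct and follows essentially the same route as the paper: both proofs hinge on the auxiliary ideal $aI$ for a nonzero $a\in I$, both show it is non-$S$-radical via $a^2\in aI$ but $a\notin aI$ (the paper phrases this with $sx_s\notin x_sI$ using $I\cap S=\emptyset$, you use $1\notin I$), and both transfer $S$-finiteness from $aI$ back to $I$ by cancelling $a$ in the domain — the paper packages this last step as the $R$-module isomorphism $I\cong aI$, while you carry it out directly on generators, but it is the same idea.
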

	\begin{proof}
		We show that every proper ideal of $R$ is $S$-finite. If $I\cap S\neq\emptyset$, then there exists $t\in S$ such that $t\in I$, then $tI\subseteq J\subseteq I$, where $J=<t>$ is the ideal generated by $t$. Hence $I$ is an $S$-finite. Assume that $I\neq (0)$ and $I\cap S=\emptyset$. For $s\in S$, there exists $0\neq x_{s}\in I$ such that $sx_{s}\neq 0$. As $x_{s}\in I$, but $sx_{s}\notin x_{s}I$, for this, if $sx_{s}\in x_{s}I$, then $sx_{s}=x_{s}i$ for some $i\in I$ but by the cancellation property this implies $s=i\in I$, which is not possible since $I\cap S=\emptyset$. Thus $sx_{s}\notin x_{s}I$, but $sx_{s}^{2}\in sx_{s}I\subseteq x_{s}I$. This implies that $x_{s}I$ is a non-$S$-radical, so $x_{s}I$ is an $S$-finite. Now, define an $R$-module homomorphism $\phi_{s} :I\rightarrow x_{s}I$ by $\phi_{s}(j)=x_{s}j$ for all $j\in I$. Then $$ker~\phi_{s}=\{y\in I\hspace{0.1cm}|\hspace{0.1cm}\phi_{s}(y)=x_{s}y=0\} =\{y\in I\hspace{0.1cm}| x_{s}=0 \hspace{0.1cm}or \hspace{0.1cm} y=0 \hspace{0.1cm} \}.$$
		Since $R$ is an integral domain, $ker~\phi_{s}=\{0\}$. Hence $\phi_{s}$ is injective. Next, if $a\in x_{s}I$, then $a=x_{s}i$ for some $i\in I$ such that $\phi_{s}(i)=x_{s}i=a$. Thus $\phi_{s}$ is surjective, and hence $\phi_{s}$ is an $R$-module isomorphism, i.e., $x_{s}I\cong I$ as $R$-module. Therefore $I$ is an $S$-finite, and hence $R$ is an $S$-Noetherian ring.
	\end{proof}
In the context of Theorem \ref{srad}, the following question arises, which we leave open for future study.
   
	\begin{question}
		Suppose every non-$S$-radical ideal of an $S$-integral domain is $S$-finite. Is $R$ $S$-Noetherian?
	\end{question}

\section{Characterization of $S$-field}	\label{sec}
In this section, we extend the concept of a field to an $S$-field. We introduce the notion of $S$-proper ideals in $R$ and establish results analogous to those in fields. First, we recall definitions of $S$-maximal ideal and $S$-field from \cite{ed20,ed22}, and provide several examples.
	\begin{deff}\cite[Definition 2.2]{ed22}\label{smax}
		An ideal $I\subseteq R$ (disjoint from $S$) is called  an $S$-maximal ideal if there exists $s\in S$ such that for any ideal $J$ of $R$ with $I\subseteq J$, then either $sJ \subseteq I$ or $J\cap S\neq\emptyset$.
	\end{deff}
	It is evident that every maximal ideal of $R$ qualifies as an $S$-maximal ideal for any multiplicatively closed set $S$ in $R$. However, the converse does not hold.
	\begin{eg}\label{eg}
		Consider $R=\mathbb{Z}_{6}$, $S=\{\bar{1}, \bar{2}, \bar{4}\}$. The zero ideal $(\bar{0})$ is not a maximal ideal in $R$. The proper ideals of $R$ are $I_1=2\mathbb{Z}_{6}$ and $I_2=3\mathbb{Z}_{6}$. Since zero ideal $(\bar{0})\subseteq I_1$ and $I_{1}\cap S\neq\emptyset$, and $(\bar{0})\subseteq I_2$, then for $s=\bar{2}\in S,$ $sI_{2}\subseteq (\bar{0})$. Then $(\bar{0})$ is an $S$-maximal ideal.
	\end{eg}
	\begin{deff}\cite[Definition 9]{ed20}\label{sfield}
		A ring $R$ is said to be an $S$-field if the zero ideal $(0)$ is an $S$-maximal ideal of $R$.
	\end{deff}
	\noindent
	Note that every $S$-field is an $S$-integral domain, and every field is an $S$-field for any multiplicatively closed set $S\subseteq R$ but the converse is not true.
	
	\begin{eg}
		Let $R=\mathbb{Z}$, $S=\{2^{i}|\hspace{0.1cm} i\in\mathbb{N}\}$. Then $R$ is an $S$-integral domain since $R$ is an integral domain. But $R$ is not an $S$-field. For this, $(0)\subseteq I$, where $I=3\mathbb{Z}$ is an ideal of $R.$ Observe that $I\cap S=\emptyset$ and there does not exist any $s\in S$ such that $sI\subseteq (0)$. 
	\end{eg}
	
	\begin{eg}\label{infi}
		Let $R = \mathbb{Z}$, $S = \mathbb{Z} \setminus \{0\}$. Then for any ideal $I$ of $R$, $I \cap S \neq \emptyset$ and the ideal $(0)$ is an $S$-maximal. Hence $R$ is an $S$-field.
	\end{eg}
	
	\begin{eg}\label{zmn}
		Consider the ring $R=\mathbb{Z}_{mn}$, and let $S=\{\bar{m}^{i}\hspace{0.1cm}|\hspace{0.1cm}i\in\mathbb{N}\cup\{0\}\}$ be a multiplicatively closed subset of $R$, where $m$, $n$ are distinct prime numbers. The proper ideals of $R$ are $I_1=m\mathbb{Z}_{mn}$ and $I_2=n\mathbb{Z}_{mn}$. As zero ideal $(\bar{0})\subseteq I_1$ and $I_{1}\cap S\neq\emptyset$, and $(\bar{0})\subseteq I_2$, then for $s=\bar{m}\in S,$ $sI_{2}\subseteq (\bar{0})$. Then $(\bar{0})$ is an $S$-maximal ideal but not a maximal. Thus $R$ is an $S$-field but not a field.
	\end{eg}
	
	\begin{deff}\label{sprop}
		An ideal $I\subseteq R$ is said to be an $S$-proper ideal if $I\cap S = \emptyset$ and $sI\neq 0$ for all $s\in S.$ 
	\end{deff} 
	Observe that every $S$-proper ideal is a proper ideal, but a proper ideal of $R$ need not be an $S$-proper ideal. For instance, in Example \ref{zmn}, the ideal $I_1=m\mathbb{Z}_{mn}$ is a proper ideal but not an $S$-proper ideal since $S\cap I_1\neq \emptyset$.
	
	\begin{prop}\label{nosproper}
		A ring $R$ is an $S$-field if and only if it has no $S$-proper ideal.
	\end{prop}
	\begin{proof}
		Assume $R$ is an $S$-field and $I$ is a non-zero ideal of $R$. Then $(0)$ is an $S$-maximal ideal and $(0)\subseteq I,$ therefore, $I \cap S\neq \emptyset$ or $sI\subseteq (0)$ for some $s\in S.$ Thus $I$ is not $S$-proper. Conversely, suppose $R$ has no $S$-proper ideals. If $I$ is not an $S$-proper ideal, then by definition, $I\cap S\neq \emptyset$ or $sI=(0)$ for some $s\in S$. Thus $(0)$ is an $S$-maximal ideal.
	\end{proof}
	
	\begin{prop}\label{mfield}
		
		An ideal $M\subseteq R$ such that $M\cap S=\emptyset$ is an $S$-maximal if and only if $R/M$ is an $\overline{S}$-field, where $\overline{S}=\{s+M\hspace{0.1cm}|\hspace{0.1cm}s\in S\}$ is a multiplicatively closed subset of $R/M$. 
	\end{prop}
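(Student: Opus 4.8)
The plan is to run everything through the lattice isomorphism between ideals of $R$ containing $M$ and ideals of $R/M$, and then unwind the two definitions term by term. Write $\pi\colon R\to R/M$ for the canonical surjection and, for an ideal $J$ of $R$ with $M\subseteq J$, put $\bar J=\pi(J)=J/M$; recall $J\mapsto\bar J$ is an inclusion-preserving bijection onto the set of all ideals of $R/M$, with inverse $\bar J\mapsto\pi^{-1}(\bar J)$. First I would observe that the hypothesis $M\cap S=\emptyset$ forces $\bar 0=0+M\notin\overline S$, so $\overline S$ is a multiplicatively closed subset of $R/M$ not containing $0$ and the zero ideal of $R/M$ is disjoint from $\overline S$; hence the assertions ``$(0)$ is an $\overline S$-maximal ideal of $R/M$'' and ``$R/M$ is an $\overline S$-field'' are meaningful and, by Definition \ref{sfield}, literally the same statement.

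Next I would record the two translations that make the argument go. Fix $s\in S$ and set $\bar s=s+M\in\overline S$. For an ideal $J\supseteq M$ with image $\bar J=J/M$: (i) $sJ\subseteq M$ if and only if $\bar s\bar J=(0)$ in $R/M$, since $sJ$ is again an ideal (as $s(x-y)=sx-sy$ and $r(sx)=s(rx)$), $\pi(sJ)=\{(s+M)(x+M):x\in J\}=\bar s\bar J$, and $sJ\subseteq M\iff\pi(sJ)=(0)$; and (ii) $J\cap S\neq\emptyset$ if and only if $\bar J\cap\overline S\neq\emptyset$, because $t\in J\cap S$ gives $t+M\in\bar J\cap\overline S$, while conversely if $\bar t\in\bar J\cap\overline S$ then $\bar t=t+M$ for some $t\in S$ and $t+M=j+M$ for some $j\in J$, whence $t-j\in M\subseteq J$ and so $t\in J\cap S$.

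Finally I would assemble these. If $M$ is $S$-maximal with witness $s\in S$ (so for every ideal $J\supseteq M$ one has $sJ\subseteq M$ or $J\cap S\neq\emptyset$), then for an arbitrary ideal $\bar J$ of $R/M$ I write $\bar J=J/M$ with $J=\pi^{-1}(\bar J)\supseteq M$ and apply (i) and (ii) to conclude $\bar s\bar J=(0)$ or $\bar J\cap\overline S\neq\emptyset$; thus $\bar s$ witnesses that $(0)$ is $\overline S$-maximal, i.e.\ $R/M$ is an $\overline S$-field. The converse is the same computation read backwards: a witness $\bar s\in\overline S$ lifts to some $s\in S$ with $\bar s=s+M$, and for any ideal $J\supseteq M$ one applies (i) and (ii) to $\bar J=J/M$ to obtain $sJ\subseteq M$ or $J\cap S\neq\emptyset$. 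I do not expect a serious obstacle; the only points demanding care are the pull-back direction of (ii) — that an element of $\bar J$ lying in $\overline S$ genuinely comes from an element of $J\cap S$, not merely from an element of $S$ congruent mod $M$ to something in $J$ — and the bookkeeping that the \emph{single} witness $s$ (respectively $\bar s$) is transported across the quotient rather than being allowed to vary with $J$.
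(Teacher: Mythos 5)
Your proposal is correct and follows essentially the same route as the paper: both arguments push the two conditions in Definition \ref{smax} through the correspondence $J\mapsto J/M$ between ideals of $R$ containing $M$ and ideals of $R/M$, the only cosmetic difference being that the paper phrases the converse via Proposition \ref{nosproper} (no $\overline S$-proper ideals) while you verify the two cases directly. Your version is in fact more careful than the paper's, which dispatches the forward direction with a one-line appeal to Definition \ref{sfield}; your explicit handling of the pull-back of a witness in $\bar J\cap\overline S$ and of the uniformity of the witness $s$ fills in exactly the details the paper leaves implicit.
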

	\begin{proof}
		Suppose $M$ is an $S$-maximal ideal of $R$. Then, by Definition \ref{sfield}, $R/M$ is an $\overline{S}$-field. 
		
		Conversely, assume that $R/M$ is an $\overline{S}$-field. Let $M\subseteq M'$ for some ideal $M'$ of $R$. As $M'/M$ is an ideal of $R/M$ containing $M$. By Proposition \ref{nosproper}, $M'/M$ is not an $\overline{S}$-proper ideal of $R/M$. Then there are two cases:\\
		\textbf{Case 1:} If $(M'/M)\cap \overline{S}\neq\emptyset$, then there exists $\bar{s}\in\overline{S}$ such that $\bar{s}\in M'/M$. This implies that $s+M\in M'/M$, $s\in M'$ since $M\cap S=\emptyset$. Thus $ M'\cap S\neq\emptyset$, and hence $M$ is an $S$-maximal ideal of $R$.\\
		\textbf{Case 2:} If $\bar{s}M'/M\subseteq M$, the zero ideal of $R/M$ for some $\bar{s}\in\overline{S}$. This implies that $(s+M) M'/M\subseteq M$, $sM'/M\subseteq M$. Thus $ sM'\subseteq M$, and therefore $M$ is an $S$-maximal ideal of $R$.	
	\end{proof}
	\begin{cor}
		If $\phi$ is a surjective homomorphism from a ring $R$ to an $\overline{S}$-field $F$, then $ker\phi$ is an $S$-maximal ideal of $R$.
	\end{cor}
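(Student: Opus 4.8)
The plan is to reduce this to Proposition \ref{mfield} via the first isomorphism theorem. Write $M = \ker\phi$. Since $\phi$ is surjective, it induces a ring isomorphism $\bar\phi : R/M \to F$ given by $r + M \mapsto \phi(r)$. Under $\bar\phi$ the multiplicatively closed set $\overline{S} = \{s + M \mid s \in S\}$ of $R/M$ is carried bijectively onto $\phi(S)$, which is precisely the multiplicatively closed subset of $F$ with respect to which $F$ is assumed to be an $S$-field; so the two readings of $\overline{S}$ are compatible and no ambiguity arises.

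Next I would verify the hypothesis $M \cap S = \emptyset$ required by Proposition \ref{mfield}. If some $s \in S$ lay in $M$, then $\phi(s) = 0$, whence $0 \in \phi(S)$; but $F$ being a $\phi(S)$-field means $(0)$ is a $\phi(S)$-maximal ideal of $F$, and by Definition \ref{smax} an $S$-maximal ideal is disjoint from the ambient multiplicative set, so $0 \notin \phi(S)$ -- a contradiction. Hence $M$ is disjoint from $S$, and Proposition \ref{mfield} applies to $M$.

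Finally, since a ring isomorphism carries $S$-maximal ideals to $S$-maximal ideals relative to the correspondingly transported multiplicative set, and being an $S$-field simply asserts that the zero ideal is $S$-maximal, the isomorphism $\bar\phi$ transfers the $\phi(S)$-field structure of $F$ to an $\overline{S}$-field structure on $R/M$. Applying Proposition \ref{mfield} in the direction ``$R/M$ is an $\overline{S}$-field $\Rightarrow$ $M$ is $S$-maximal'' then yields that $\ker\phi = M$ is an $S$-maximal ideal of $R$, as claimed. I do not expect any genuine obstacle here; the only point deserving a line of care is the disjointness $\ker\phi \cap S = \emptyset$, which, as noted, is forced automatically by the definition of an $S$-field.
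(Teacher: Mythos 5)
Your argument is correct and is exactly the route the paper intends: the corollary is stated without proof immediately after Proposition \ref{mfield}, and the intended justification is precisely the first isomorphism theorem $R/\ker\phi \cong F$ combined with the converse direction of that proposition. Your extra care about the disjointness $\ker\phi \cap S = \emptyset$ (which also follows from the paper's remark that $\phi(S)$ is multiplicatively closed in $F$ only when $S \cap \ker\phi = \emptyset$) is a worthwhile detail the paper leaves implicit.
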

	A ring $R$ with $x^2=x$ for all $x\in R$ is called a Boolean ring. In Boolean rings, every prime ideal is maximal. We extend this result to $S$-version in the next Proposition.
	\begin{prop}
		Every $S$-prime ideal is an $S$-maximal ideal in a Boolean ring $R$.
	\end{prop}
	\begin{proof}
		Let $P\subseteq R$ be an $S$-prime ideal and $P\subseteq Q$ with $Q\cap S=\emptyset$, where $Q$ is an ideal of $R$. We show that $sQ\subseteq P$ for some $s\in S$. Let $x\in Q$. Then $x^{2}r=xr$ for all $r\in R$. Consequently, $x(xr-r)=0\in P$, then there exists $s\in S$ such that $sx\in P$ or $s(xr-r)\in P$. If $s(xr-r)\in P$, $sxr-sr=p$ for some $p\in P$. This implies that $sr=sxr-p\in Q$ since $x\in Q$ and $P\subseteq Q$. Consequently, $sR\subseteq Q$. Then $s\in Q$, which is not possible since $R$ has unity and $S\cap Q=\emptyset$. Thus $sx\in P$, $sQ\subseteq P$. Hence $P$ is an $S$-maximal ideal of $R$.
	\end{proof}
	\begin{cor}
		Let $R$ be a Boolean ring, and $P$ be an $S$-prime ideal of $R$. Then $R/P$ is an $\bar{S}$-field, where $\bar{S}=\{s+P\mid s\in S\}$ is a multiplicatively closed set of $R/P$.
	\end{cor}
	
	\begin{theorem}\label{uni}
		Let $R$ be an $S$-Artinian ring, and $P$ be an $S$-prime ideal of $R$. Then $R/P$ is an $\bar{S}$-field, where $\bar{S}=\{s+P\mid s\in S\}$ is a multiplicatively closed subset of $R/P$.
	\end{theorem}
	\begin{proof}
		Let $R$ be an $S$-Artinian ring, and $P$ be an $S$-prime ideal of $R$. Then $R/P$ is $\overline{S}$-integral domain. To show $R$ is an $S$-field, it is enough to show that $P$ is an $S$-maximal ideal of $R$. Suppose $P\subseteq Q$ for some ideal $Q$ of $R$ with $sQ\nsubseteq P$ for all $s\in S$. Then for all $s\in S$, there exists $q\in Q$ such that $sq\notin P$. Since $P$ is  $S$-prime, there exists $t\in S$ such that $(P:t)$ is a prime ideal of $R$, by \cite[Proposition 1]{ah20}. According to our assumption, for this $t$, there exists $q'\in Q$ such that $tq'\notin P$. This implies that $q'\notin P$. Our aim is to show that $Q\cap S\neq\emptyset$. Consider the following decreasing sequence of ideals of $R$
		$$P+Rq'\supseteq P+Rq'^{2}\supseteq\cdots\supseteq P+Rq'^{n}\supseteq \cdots .$$
		Since $R$ is an $S$-Artinian, there exist $s'\in S$ and $k\in \mathbb{N}$ such that $s'(P+Rq'^{k})\subseteq (P+Rq'^{k+1})$. Let $p\in P$. Then $s'(p+q'^{k})=p'+q'^{k+1}r$ for some $r\in R$ and $p'\in P$. This implies that $q'^{k}(s'-q'r)\in P$. Consequently, $q'^{k}(s'-q'r)\in (P:t)$. Then either $q'^{k}\in (P:t)$ or $s'-q'r\in (P:t)$ since $(P:t)$ is a prime ideal of $R$. If $q'^{k}\in (P:t)$, $q'\in (P:t)$. Thus $tq'\in P$, which contradicts the assumption that $tq'\notin P$. On the other hand, if $s'-q'r\in (P:t)$, then $ts'-trq'\in P\subseteq Q$. Since $trq'\in Q$, this implies that $ts'\in Q$. Thus $Q\cap S\neq\emptyset$, as desired.
	\end{proof}
	Next we investigate $S$-integral domains and $S$-fields under ring homomorphisms. Let $f: R\longrightarrow R'$ be a ring homomorphism and $S$ be a multiplicatively closed subset of $R$. Then $f(S)$ is a multiplicatively closed subset of $R'$ if and only if $S\cap \text{Ker}(f)=\emptyset$. In particular, if $f$ is one-one, then $f(S)$ is a multiplicatively closed set in $R'$.
	\begin{prop}
		Let $f$ be an isomorphism of a ring $R$ onto a ring $R'$. Then we have the following:
		
		\begin{enumerate}
			\item If $R$ is an $S$-integral domain, then $R'$ is an $f(S)$-integral domain.
			\item If $R$ is an $S$-field, then $R'$ is an $f(S)$-field.
		\end{enumerate}
	\end{prop}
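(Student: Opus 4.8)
The plan is to unwind the definitions and transport everything along the isomorphism $f$. Throughout, write $S' = f(S)$, which is a multiplicatively closed subset of $R'$ since $f$ is one-one (and surjective), so every element of $S'$ has the form $f(s)$ for a unique $s \in S$, and $f(s_1)f(s_2) = f(s_1 s_2)$.

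For part (1), I would start from an element $s \in S$ witnessing that $R$ is an $S$-integral domain, i.e. $ab = 0$ in $R$ implies $sa = 0$ or $sb = 0$. I claim $f(s)$ witnesses that $R'$ is an $f(S)$-integral domain. Given $a', b' \in R'$ with $a'b' = 0$, use surjectivity to write $a' = f(a)$, $b' = f(b)$; then $f(ab) = f(a)f(b) = 0$, and since $f$ is injective, $ab = 0$. Hence $sa = 0$ or $sb = 0$, and applying $f$ gives $f(s)a' = f(sa) = 0$ or $f(s)b' = f(sb) = 0$. Alternatively, and perhaps more cleanly, one can invoke Lemma \ref{int}: $R$ is an $S$-integral domain iff $(0)$ is an $S$-prime ideal of $R$, and since $f$ is an isomorphism it carries the ideal $(0)$ of $R$ to the ideal $(0)$ of $R'$ and carries the $S$-prime property to the $f(S)$-prime property (the containment $ab \in (0)$, $sa \in (0)$ etc. are all preserved and reflected by a bijective ring homomorphism). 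Either route is routine.

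For part (2), the cleanest approach is again to use the characterization via Proposition \ref{nosproper}: $R$ is an $S$-field iff it has no $S$-proper ideal. An isomorphism $f$ induces a bijection between the ideals of $R$ and the ideals of $R'$, sending $I \mapsto f(I)$. I would check that under this bijection $S$-proper ideals correspond exactly to $f(S)$-proper ideals: $I \cap S = \emptyset$ iff $f(I) \cap f(S) = \emptyset$ (using injectivity of $f$), and $sI \neq 0$ for all $s \in S$ iff $f(s)f(I) = f(sI) \neq 0$ for all $f(s) \in f(S)$ (again using injectivity so that $f(sI) = 0$ forces $sI = 0$). Consequently $R$ has no $S$-proper ideal iff $R'$ has no $f(S)$-proper ideal, and the result follows. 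One could equally argue directly from Definition \ref{smax} that $f$ carries the $S$-maximality of $(0)$ in $R$ to the $f(S)$-maximality of $(0)$ in $R'$.

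There is no real obstacle here; the only point requiring a modicum of care is the systematic use of the \emph{injectivity} of $f$ at each step where one needs to \emph{reflect} an equality (deducing $ab = 0$ in $R$ from $f(ab) = 0$, or $sI = 0$ from $f(sI) = 0$), and the use of \emph{surjectivity} to represent arbitrary elements and ideals of $R'$ as images from $R$. Since $f$ is assumed to be an isomorphism, both are available, so the proof is essentially a bookkeeping exercise in transporting the defining conditions across $f$.
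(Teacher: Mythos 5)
Your proposal is correct and follows essentially the same route as the paper: part (1) is proved by lifting $a',b'$ to $R$ via surjectivity, reflecting $ab=0$ via injectivity, and pushing the witness $s$ forward to $f(s)$; part (2) amounts to transporting the two alternatives in the $S$-maximality of $(0)$ (either $J\cap S\neq\emptyset$ or $sJ\subseteq(0)$) across the ideal correspondence induced by $f$, which the paper does by passing to $J_R=f^{-1}(I_{R'})$. Your phrasing of part (2) through Proposition~\ref{nosproper} rather than directly through Definition~\ref{smax} is only a cosmetic difference, as you yourself note.
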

	\begin{proof}
		\leavevmode
		\begin{enumerate}
			\item Let $R$ be an $S$-integral domain. Since $S\subseteq R$ is a multiplicatively closed set in $R$ and $f$ is injective, $0\notin f(S)$, as $0\notin S$ and $f(1_{R})=1_{R'}\in f(S)$. Thus $f(S)$ is a multiplicatively closed subset of $R'$. Let $\alpha, ~\beta \in R'$ be such that $\alpha\cdot \beta=0_{R'}$. Since $f$ is an isomorphism, there exist unique $a, b\in R$ such that $f(a)=\alpha$ and $f(b)=\beta$. Now $\alpha\cdot \beta=0_{R'}$ implies $f(a)f(b)=0_{R'}$. Consequently, $f(ab)=f(0_{R})$. This implies that $ab=0_{R}$. Since $R$ is an $S$-integral domain, there exists $s\in S$ such that either $sa=0$ or $sb=0$. It follows that either $f(s)\alpha=0_{R'}$ or $f(s)\beta=0_{R'}$. Thus $R'$ is an $f(S)$-integral domain.
			\item Let $R$ be an $S$-field. Our aim is to show that the zero ideal $(0_{R'})\subseteq R'$ is an $f(S)$-maximal ideal of $R'$. Clearly, $(0_R')\subseteq I_{R'}$ for some ideal $I_{R'}\subseteq R'$. Let $J_R=f^{-1}(I_{R'}).$ Then $J_R$ is an ideal of $R$ and contains zero ideal $(0_R)$ of $R.$ Therefore, there exists an $s\in S$ such that either $sJ_{R}\subseteq (0_{R})$ or $S\cap J_{R}\neq \emptyset$. Consequently, we have $f(s)I_{R'}=f(sJ_{R})\subseteq f(0_{R})=(0_{R'})$ or $\emptyset\neq f(S)\cap f(J_{R})=f(S)\cap I_{R'}$ since $f$ is an isomorphism. Therefore $(0_{R'})$ is an $S$-maximal ideal of $R'$. Hence $R'$ is an $f(S)$-field.
		\end{enumerate}
	\end{proof}
 \begin{deff}
        A multiplicatively closed set is said to be proper if it does not contain zero or zero divisors of $R$. 
    \end{deff}
    
\begin{rem}\label{mul}
Let $S$ be a multiplicatively closed subset of $R$.	Define $\phi: R \to S^{-1}R$ as $\phi(r)=\dfrac{r}{1}$ for all $r\in R$. Then $\phi(S) =\left \{\dfrac{s}{1}\in S^{-1}R |\ s\in S\right\}$ is a multiplicatively closed suset of $S^{-1}R$. Moreover, observe that if  $S$ is a proper multiplicatively closed set, then $\phi(S)$ is also a proper multiplicatively closed set.
	\end{rem}

	\begin{prop} \label{S-1rfield}
		If $R$ is an $S$-field, then $S^{-1}R$ is a $\phi(S)$-field.
	\end{prop}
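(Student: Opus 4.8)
The plan is to push the $S$-maximality witness of $(0)$ in $R$ through the canonical map $\phi\colon R\to S^{-1}R$. First I would check that $(0)$ is genuinely disjoint from $\phi(S)$: since $R$ is an $S$-field, $(0)$ is $S$-maximal, hence $(0)\cap S=\emptyset$, so $0\notin S$; as $S$ is multiplicatively closed, $us\neq 0$ for all $s,u\in S$, which says exactly that $\frac{s}{1}\neq 0$ in $S^{-1}R$ for every $s\in S$, i.e.\ $0\notin\phi(S)$. Thus it is meaningful to ask whether $(0)$ is a $\phi(S)$-maximal ideal of $S^{-1}R$ (and, as already noted in Remark \ref{mul}, $\phi(S)$ is a multiplicatively closed subset of $S^{-1}R$).

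Next, fix $s_0\in S$ witnessing that $(0)$ is $S$-maximal in $R$, so that for every ideal $J$ of $R$ (all of which contain $(0)$) we have $s_0 J=0$ or $J\cap S\neq\emptyset$, by Definition \ref{smax} applied to $I=(0)$. I claim $\frac{s_0}{1}\in\phi(S)$ is a witness for $(0)$ being $\phi(S)$-maximal. Let $\mathcal{J}$ be an ideal of $S^{-1}R$, and set $I=\phi^{-1}(\mathcal{J})=\{r\in R\mid \frac{r}{1}\in\mathcal{J}\}$, an ideal of $R$. Using the standard fact that every ideal of $S^{-1}R$ is the extension of its contraction, we have $\mathcal{J}=S^{-1}I=\{\frac{a}{u}\mid a\in I,\ u\in S\}$. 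Now apply the $S$-maximality of $(0)$ to $I$: either $s_0 I=0$, or $I\cap S\neq\emptyset$. In the first case, every element $\frac{a}{u}\in\mathcal{J}$ (with $a\in I$, $u\in S$) satisfies $\frac{s_0}{1}\cdot\frac{a}{u}=\frac{s_0 a}{u}=0$, hence $\frac{s_0}{1}\mathcal{J}=(0)$. In the second case, choosing $s\in I\cap S$ gives $\frac{s}{1}\in\mathcal{J}\cap\phi(S)$, so $\mathcal{J}\cap\phi(S)\neq\emptyset$. In either case the defining condition of $\phi(S)$-maximality holds, so $(0)$ is a $\phi(S)$-maximal ideal and $S^{-1}R$ is a $\phi(S)$-field.

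I do not expect a serious obstacle here; the only points requiring care are the two easy verifications above — that $0\notin\phi(S)$, so the zero ideal is legitimately disjoint from $\phi(S)$, and the commutative-algebra fact that every ideal $\mathcal{J}$ of $S^{-1}R$ coincides with $S^{-1}(\phi^{-1}(\mathcal{J}))$, which is what lets us write every element of $\mathcal{J}$ with numerator in the contracted ideal $I$ and thereby reduce the statement to the $S$-maximality of $(0)$ in $R$.
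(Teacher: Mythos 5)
Your proposal is correct and follows essentially the same route as the paper: the paper likewise contracts an ideal $S^{-1}I$ of $S^{-1}R$ to the ideal $J=\{a\in R\mid \phi(a)\in S^{-1}I\}$ of $R$, applies the $S$-maximality of $(0)$ to this contraction, and splits into the same two cases ($J\cap S\neq\emptyset$ yielding $\mathcal{J}\cap\phi(S)\neq\emptyset$, or $sJ=0$ yielding $\phi(s)\mathcal{J}=[0]$). Your explicit checks that $0\notin\phi(S)$ and that every ideal of $S^{-1}R$ is the extension of its contraction are sound and only make more precise what the paper leaves implicit.
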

	\begin{proof}
		We denote zero ideal in $S^{-1}R$ by $[0]$ and zero ideal in $R$ by $(0)$. By Remark \ref{mul}, $\phi(S)$ is a multiplicatively closed set of $S^{-1}R.$ Let $R$ be an $S$-field. We show that $S^{-1}R$ is a $\phi(S)$-field, that is, $[0]$ is a $\phi(S)$-maximal ideal in $S^{-1}R$. Suppose that $[0]\subseteq S^{-1} I\subseteq S^{-1}R$, for some ideal $I$ of $R$. 
		Define $J=\{a\in R| \ \phi(a)\in S^{-1}I\}.$ Note that whenever $\dfrac{a}{s}\in S^{-1}I$, then $\dfrac{s}{1}\cdot \dfrac{a}{s}=\dfrac{a}{1}\in S^{-1}I,$ that is, $a\in J.$ Thus $J=\{a\in R| \ \dfrac{a}{s}\in S^{-1}I, \text{ for some } s\in S\}$. It is easy to see that $J$ is an ideal of $R$ and $(0)\subseteq J$. Since $R$ is an $S$-field, then either $J\cap S\neq \emptyset$ or there is an $s\in S$ such that $sJ\subseteq(0)$.
		\begin{description}
			\item[Case 1:]Let $a\in J\cap S\neq \emptyset$, then $\dfrac{a}{s}\in S^{-1}I$ for some $s\in S$. Since $S^{-1}I$ is an ideal therefore, $\dfrac{s}{1}\cdot\dfrac{a}{s}=\dfrac{a}{1}\in S^{-1}I$. Thus $\dfrac{a}{1}\in~S^{-1}I\cap\phi(S)\neq \emptyset$. Thus $[0]$ is a $\phi(S)$-maximal ideal of $S^{-1}R$.
			\item[Case 2:] If $J\cap S=\emptyset$, then $sJ\subseteq(0)$. Consequently, for any $\dfrac{a}{s'}\in S^{-1}I,$ we have $\phi(s)\cdot\dfrac{a}{s'}=\dfrac{s}{1}\cdot\dfrac{a}{s'}=\dfrac{0}{s'}=\dfrac{0}{1},$ that is, $\phi(s)S^{-1}I\subseteq [0]$. Therefore $[0]$ is a $\phi(S)$-maximal ideal.\\ Hence, $S^{-1}R$ is a $\phi(S)$-field.
		\end{description}
	\end{proof}
	The converse of the above theorem is not true in general. 
	\begin{eg}\label{sexample}
		Let $\mathbb{E}=\bigoplus_{p\in\mathcal{P}}\mathbb{Z}/p\mathbb{Z}$, where $\mathcal{P}$ is the set of all prime numbers. Define $R=\mathbb{Z}(+)\mathbb{E}$ and $S= (\mathbb{Z}\setminus\{0\}) (+)(0)$. By \cite[Example 3.12]{ki24}, the localization 
	$S^{-1}R=\mathbb{Q}$, which is a field, but $R$ is not an $S$-field since $R$ is not an $S$-integral domain (see Example \ref{nsint}).
	\end{eg}
	Next, we give some sufficient conditions on multiplicatively closed sets so that the converse of  Proposition \ref{S-1rfield} holds.
	\begin{theorem}\label{S-1r}
		Let $S$ be a proper multiplicatively closed subset of $R$, and $S^{-1}R$ is a $\phi(S)$-field. Then $R$ is an $S$-field if any one of the following holds
		\begin{enumerate}
			\item $S$ is a finite set.
			\item $R$ is $S$-PID.
		\end{enumerate}
	\end{theorem}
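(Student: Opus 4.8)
The plan is to first reduce the hypothesis to an ordinary statement about $S^{-1}R$. Since $S$ is proper we have $0\notin S$, so $S^{-1}R\neq 0$ and every $\tfrac{s}{1}\in\phi(S)$ is a unit of $S^{-1}R$ (with inverse $\tfrac{1}{s}$); in particular $[0]\cap\phi(S)=\emptyset$. Because multiplying an ideal of $S^{-1}R$ by a unit does not change it, the hypothesis that $[0]$ is $\phi(S)$-maximal degenerates: for the witnessing $t\in S$ and any nonzero ideal $\mathcal{J}$ of $S^{-1}R$ one has $\tfrac{t}{1}\mathcal{J}=\mathcal{J}\neq[0]$, forcing $\mathcal{J}\cap\phi(S)\neq\emptyset$ and hence $\mathcal{J}=S^{-1}R$. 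Thus the hypothesis says exactly that $S^{-1}R$ is a field, which by the usual description of localizations unwinds to the following: for every $r\in R$, either $ur=0$ for some $u\in S$, or $rr'\in S$ for some $r'\in R$ (the two alternatives $\tfrac{r}{1}=0$ and $\tfrac{r}{1}$ a unit in $S^{-1}R$).

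Granting this, I would verify that $(0)$ is an $S$-maximal ideal of $R$; concretely, that every ideal $I$ of $R$ satisfies $I\cap S\neq\emptyset$ or $sI=(0)$ for some $s\in S$ (with, in part~(1), a single $s$ serving for all $I$, as Definition~\ref{smax} demands). Fix an ideal $I$. If some $a\in I$ divides an element of $S$, say $aa'\in S$, then $aa'\in I\cap S$ and nothing more is needed for this $I$; otherwise, by the reduction above, every $a\in I$ is $S$-torsion, say $w_a a=0$ with $w_a\in S$. In part~(1), with $S$ finite set $s^{\ast}:=\prod_{c\in S}c\in S$; then $w_a\mid s^{\ast}$ gives $s^{\ast}a=0$ for all $a\in I$, so $s^{\ast}I=(0)$, and since $s^{\ast}$ does not depend on $I$ it simultaneously witnesses that $(0)$ is $S$-maximal. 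In part~(2), as $R$ is $S$-PID pick $\sigma\in S$ and $a\in I$ with $\sigma I\subseteq(a)\subseteq I$ and run the same dichotomy on this particular $a$: if $a$ divides an element of $S$ then $I\cap S\neq\emptyset$ as above; otherwise $wa=0$ for some $w\in S$, and for every $b\in I$ we may write $\sigma b=ra$, whence $(w\sigma)b=r(wa)=0$, so $(w\sigma)I=(0)$. In every case the required condition holds for $I$, so $R$ is an $S$-field.

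The only real obstacle I anticipate is isolating a usable witness $s\in S$, and this is precisely where the three hypotheses do their work: properness of $S$ keeps the localization nondegenerate and turns $\phi(S)$ into a set of units, legitimizing the reduction in the first paragraph; finiteness of $S$ hands us the uniform witness $\prod_{c\in S}c$; and the $S$-PID hypothesis substitutes for finiteness by providing, for each ideal $I$, a single generator $a$ that controls all of $I$ up to a factor lying in $S$. If all three hypotheses are dropped, the torsion cofactors $w_a$ need not possess a common multiple inside $S$ and the argument collapses, in agreement with Example~\ref{sexample}, where $S$ fails to be proper and $R$ is not an $S$-field even though $S^{-1}R=\mathbb{Q}$. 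What remains is routine bookkeeping: checking $s^{\ast},w\sigma\in S$, that $aa'\in I$, and the elementary fraction identities underlying the dichotomy.
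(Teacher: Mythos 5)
Your proof is correct and follows essentially the same route as the paper's: in both arguments the $\phi(S)$-maximality of $[0]$ reduces each ideal $I$ of $R$ to the dichotomy ``$I\cap S\neq\emptyset$'' versus ``$I$ is $S$-torsion,'' with finiteness of $S$ supplying the uniform annihilator $\prod_{c\in S}c$ and the $S$-PID hypothesis supplying a single generator $a$ whose annihilator $w$ kills $\sigma I$. The only difference is organizational: you first collapse the hypothesis to the statement that $S^{-1}R$ is a field (using properness of $S$ to see that $\phi(S)$ consists of nonzero units and that $S^{-1}R\neq 0$), whereas the paper applies the $\phi(S)$-maximality of $[0]$ directly to $S^{-1}I$ and invokes properness to cancel $u$ in $u(j-s_1s_2)=0$; the two uses of properness are equivalent in effect.
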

	\begin{proof}
		\begin{enumerate}
			\item Let $J$ be an ideal of $R$. Then $S^{-1}J\subseteq S^{-1}R$ is an ideal containing the zero ideal $[0]$ of $S^{-1}R.$ Since $S^{-1}R$ is a $\phi(S)$-field, therefore, $S^{-1}J\cap \phi(S)\neq \emptyset $ or there exists $\dfrac{t}{1}\in \phi(S)$ such that $\dfrac{t}{1}(S^{-1}J)\subseteq [0].$
			\begin{description}
				\item[Case 1:] Suppose $S^{-1}J\cap \phi(S)\neq \emptyset$ and $\dfrac{a}{s}\in S^{-1}J\cap \phi(S).$ This implies that $\dfrac{a}{s}=\dfrac{j}{s_1}=\dfrac{s_2}{1}$ for some $s_1,s_2\in S$ and $j\in J.$ Consequently, there exists $u\in S$ such that $u(j-s_1s_2)=0$. Thus $j=s_1s_2$ since $S$ is proper. Thus $s_1s_2\in J\cap S\neq \emptyset$.
				\item[Case 2:] Suppose $\dfrac{t}{1}S^{-1}J\subseteq [0],$ where $\dfrac{t}{1}\in \phi(S)$. This implies that $S^{-1}J\subseteq [0]$, since $\dfrac{t}{1}$ is a unit in $S^{-1}R.$ Consequently, $\dfrac{j}{s}=\dfrac{0}{1}$ for all $s\in S$ and $j\in J.$ In particular, for $s=1,$ $\dfrac{j}{1}=\dfrac{0}{1}$, $\forall j\in J$ which implies that there exists $s_j\in S$ for all $j$ such that $s_j j=0.$ Take $u=\prod_{s\in S} s$, then $uJ\subseteq (0),$ because $u\in S$ is finite product of elements in $S.$
			\end{description}
			Thus $(0)$ is an $S$-maximal that is $R$ is an $S$-field.
			\item Let $(0)\subseteq J$ be an ideal of $R.$ Since $R$ is an $S$-PID, there exist $s\in S$ and $a\in J$ such that $sJ\subseteq \langle a\rangle.$ Thus to show $R$ is an $S$-field, it is sufficient to show that whenever $(0)\subseteq \langle a\rangle$, then $\langle a\rangle \cap S\neq \emptyset$ or there exists $t\in S$ such that $t\langle a\rangle\subseteq (0).$ 
			Let $S^{-1}R$ be $\phi(S)$-field, and $(0)\subseteq \langle a\rangle,$ where $a\in R$. Then $S^{-1}\langle a \rangle$ is an ideal of $S^{-1}R$ containing the zero ideal $[0]$ of $S^{-1}R.$ Therefore, $S^{-1}\langle a\rangle\cap\phi(S)\neq \emptyset$ or there exists $\dfrac{t}{1}\in \phi(S)$ such that $\dfrac{t}{1}S^{-1}\langle a\rangle \subseteq [0].$
			\begin{description}
				\item[Case 1:] Let $S^{-1}\langle a\rangle \cap \phi(S)\neq \emptyset$ and $\dfrac{t}{s}\in S^{-1}\langle a\rangle \cap \phi(S).$ This implies that $\dfrac{t}{s}=\dfrac{b}{s_1}=\dfrac{s_2}{1}$ for some $b\in\langle a\rangle $ and $s_1,s_2\in S.$ Consequently, $u(b-s_1s_2)=0$ for some $u\in S$. Thus $b=s_1s_2,$ since $S$ is proper. Hence $b\in \langle a\rangle \cap S\neq \emptyset.$
				\item[Case 2:] Let $\dfrac{t}{1}(S^{-1}\langle a\rangle)\subseteq [0]$. This implies that $S^{-1}\langle a\rangle \subseteq [0],$ since $\dfrac{t}{1}$ is a unit in $S^{-1}R$. Consequently, $s\langle a\rangle\subseteq (0)$ for some $s\in S$.
			\end{description}
			Thus $(0)$ is an $S$-maximal that is $R$ is $S$-field.
		\end{enumerate}
	\end{proof}
	Every finite integral domain is a field. A natural question is: Does analogous result hold for the $S$-integral domain? The answer to this question is affirmative.
	
	\begin{cor}\label{cor}
		Every finite $S$-integral domain is an $S$-field.
	\end{cor}
	\begin{proof}
		Let $R$ be finite $S$-integral domain. Then, by Theorem \ref{domain}, $S^{-1}R$ is a finite integral domain because $S$ is finite. Therefore $S^{-1}R$ is a field since every finite integral domain is a field. As every field is an $S$-field for any multiplicatively closed set $S,$ therefore, $S^{-1}R$ is a $\phi(S)$-field. Also $S$ is finite; then by Theorem \ref{S-1r}, $R$ is an $S$-field.
	\end{proof}
	\section{$\mathbf{S}$-Version of Krull intersection theorem}
	In 2024, Kim et al. \cite{ki24} gave the definition of $S$-dimension and prove that if $R$ is an $S$-Noetherian $S$-domain with $dim_{S}(R)=1$. Then every proper ideal $S^{-1}I$ of $S^{-1}R$, where $I$ is an ideal of $R$ disjoint with $S$ can be uniquely written as a product of primary ideals whose radicals are distinct (see Theorem \ref{produt}). Krull in the 1930, provided a fundamental insight into the structure of Noetherian rings. He proved that for a Noetherian ring $R$ and an ideal $I$, we have $\bigcap\limits_{n=1}^{\infty}I^{n}=I(\bigcap\limits_{n=1}^{\infty}I^{n})$. Moreover, if $R$ is an integral domain, then $\bigcap\limits_{n=1}^{\infty}I^{n}=0$ (see \cite{ad75}). In 2024, Singh et al. \cite{ts24} proved the existence of $S$-primary decomposition in $S$-Noetherian rings. In this section, we prove the $S$-Krull intersection theorem using $S$-primary decomposition. 
	\begin{deff}$\cite{ki24}$
		Let $R$ be a ring and $S \subseteq R$ be a multiplicatively closed set. 
		\begin{enumerate}
			\item Then the sequence $P = P_{0} \supset P_1\supset\cdots \supset P_{n-1} \supset P_{n}$ is said to be an $S$-strictly decreasing chain (or an $S$-chain, for short) if for every $i\in\{0, \ldots, n-1\}$ and for each $s\in S$, we have $sP_{i+1}\nsubseteq P_i$.
			\item 	Let $P$ be an $S$-prime ideal of $R$. Then $S$-height of $P$, denoted by $S$-$ht(P)$, is the supremum of the lengths $n$ of all $S$-strictly decreasing chains $P = P_{0} \supset P_1\supset\cdots \supset P_{n-1} \supset P_{n}$ of $S$-prime ideals of $R$, and $dim_{S}(R) = sup\{$S$-ht(P)\mid P\in Spec_{S}(R)\}$ is called the $S$-Krull dimension of $R$.
		\end{enumerate}
	\end{deff}
	\begin{rem}\label{sp}
		If $P_{i}$ $(1\leq i\leq n)$ are distinct $S$-prime ideals, then $S^{-1}P_{i}$ need not be distinct. For this, consider $R=\mathbb{Z}_{12}$, ~$S=\{\bar{1}, \bar{2},\bar{4}, \bar{8}\}$ and let $P_{1}=3\mathbb{Z}_{12}$, $P_{2}=6\mathbb{Z}_{12}$ be two ideals of $R$. Observe that $P_{i}\cap S=\emptyset$ for $i=1, 2$. Since $P_{1}$ is a prime ideal of $R$, and so $S$-prime ideal of $R$. Clearly, $P_{2}$ is also an $S$-prime ideal of $R$. To show this let $a, b\in R$ be such that $ab\in P_{2}$. Then choice of $a=\overline{2k}$ and $b=\overline{3k'}$ or ($a=\overline{3k'}$ and $b=\overline{2k}$), where $k, ~k'\in\mathbb{Z}$, then it follows that $sb\in P_{2}$ or ~$sa\in P_{2}$~ for $s=\bar{4}$. Thus $P_{2}$ is an $S$-prime ideal of $R$. Evidently, $S^{-1}P_{1}=(\bar{0})=S^{-1}P_{2}$, but $P_{1}\neq P_{2}$.
	\end{rem}
\noindent
Recall from \cite[Proposition 2.5]{me22} that if $Q$ is an $S$-primary ideal of a ring $R$, then $P=\text{rad}(Q)$ is an $S$-prime ideal. In such a case,  $Q$ is called  $P$-$S$-primary ideal of $R$.
	\begin{theorem}\label{produt}
		Let $R$ be an $S$-Noetherian $S$-domain with $dim_{S}(R)=1$. Then every proper ideal $S^{-1}I$ of  $S^{-1}R$, where $I$ is an ideal of $R$ disjoint with $S$ can be uniquely written as a product of primary ideals whose radicals are all distinct.
	\end{theorem}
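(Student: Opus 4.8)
The plan is to pass to the localization $S^{-1}R$, translate the three $S$-hypotheses into the ordinary ones, and then read off both existence and uniqueness from the classical structure theory of one–dimensional Noetherian domains. First I would record the translations. Since $R$ is $S$-Noetherian, every ideal $J$ of $R$ satisfies $sJ\subseteq J'\subseteq J$ for some $s\in S$ and some finitely generated $J'$; localizing (where $s$ becomes a unit) gives $S^{-1}J=S^{-1}J'$, so every ideal of $S^{-1}R$ is finitely generated and $S^{-1}R$ is Noetherian (cf.\ \cite{ad02}). By Proposition \ref{local}, $S^{-1}R$ is an integral domain. Finally, via the correspondence between the prime ideals of $S^{-1}R$ and the $S$-prime ideals of $R$ disjoint from $S$, the hypothesis $\dim_S(R)=1$ gives $\dim(S^{-1}R)\le 1$ (see \cite{ki24}); if $S^{-1}R$ is a field the theorem is vacuous, so we may assume $\dim(S^{-1}R)=1$, i.e.\ every nonzero prime of $S^{-1}R$ is maximal.

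Next I would prove existence. Let $\mathfrak{a}=S^{-1}I$ be a nonzero proper ideal of $S^{-1}R$ (the case $S^{-1}I=[0]$ is immediate, as $[0]$ is itself primary in a domain). By Noetherianness, $\mathfrak{a}$ admits a minimal primary decomposition $\mathfrak{a}=\mathfrak{q}_1\cap\cdots\cap\mathfrak{q}_r$ with $\mathfrak{p}_i=\mathrm{rad}(\mathfrak{q}_i)$ pairwise distinct. Since $\mathfrak{a}\neq 0$ in a domain, no $\mathfrak{p}_i$ is the zero ideal (any nonzero $b\in\mathfrak{a}$ lies in $\mathfrak{a}:x$ for every $x$, so $0\notin\mathrm{Ass}(S^{-1}R/\mathfrak{a})$), hence each $\mathfrak{p}_i$ is a nonzero prime, hence maximal. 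Distinct maximal ideals are comaximal, and comaximality of the radicals forces $\mathfrak{q}_i+\mathfrak{q}_j=S^{-1}R$ for $i\neq j$; therefore the pairwise comaximal ideals $\mathfrak{q}_1,\dots,\mathfrak{q}_r$ satisfy $\mathfrak{q}_1\cap\cdots\cap\mathfrak{q}_r=\mathfrak{q}_1\cdots\mathfrak{q}_r$. Thus $S^{-1}I=\mathfrak{q}_1\cdots\mathfrak{q}_r$ is a product of primary ideals with distinct radicals.

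For uniqueness I would argue that every component is isolated. Since each $\mathfrak{p}_i$ is maximal it is minimal over $\mathfrak{a}$, so the decomposition has no embedded prime, and by the uniqueness theorem for isolated primary components each $\mathfrak{q}_i$ equals $\mathfrak{a}\,(S^{-1}R)_{\mathfrak{p}_i}\cap S^{-1}R$, hence is determined by $\mathfrak{a}$. Conversely, given any factorization $\mathfrak{a}=\mathfrak{q}_1'\cdots\mathfrak{q}_u'$ into primary ideals with distinct radicals $\mathfrak{p}_k'$: each $\mathfrak{p}_k'\supseteq\mathfrak{a}$ is a nonzero prime, hence maximal, hence minimal over $\mathfrak{a}$; and any prime minimal over $\mathfrak{a}=\prod_k\mathfrak{q}_k'$ must contain some $\mathfrak{q}_k'$, hence contain and (by minimality) equal $\mathfrak{p}_k'$. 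So $\{\mathfrak{p}_1',\dots,\mathfrak{p}_u'\}$ is exactly the set of primes minimal over $\mathfrak{a}$, forcing $u=r$ and, after reindexing, $\mathfrak{p}_k'=\mathfrak{p}_k$. Comaximality again gives $\mathfrak{q}_1'\cap\cdots\cap\mathfrak{q}_u'=\prod_k\mathfrak{q}_k'=\mathfrak{a}$, a minimal primary decomposition all of whose components are isolated, whence $\mathfrak{q}_k'=\mathfrak{q}_k$. This gives uniqueness up to order. (Alternatively, one could avoid localizing and run the same argument inside $R$ using the $S$-primary decomposition of \cite{ts24}, with ``maximal'' replaced throughout by ``$S$-maximal''; I expect the localized version to be shorter.)

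I expect the only genuine obstacle to be the bookkeeping of the $S$-to-localization translations — in particular verifying that $\dim_S(R)=1$ really forces $\dim(S^{-1}R)\le 1$ and disposing of the case where $S^{-1}R$ is a field — after which the argument is the classical one-dimensional Noetherian-domain computation. The remaining steps (``comaximal radicals $\Rightarrow$ comaximal primary ideals'' and ``pairwise comaximal ideals have intersection equal to their product'') are routine and I would not belabour them.
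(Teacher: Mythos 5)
Your argument is correct and reaches the same endgame as the paper (one-dimensionality forces the radicals to be distinct maximal ideals, comaximality turns the intersection into a product, and uniqueness follows from the second uniqueness theorem for isolated primary components), but you obtain the primary decomposition differently. The paper starts from a minimal \emph{$S$-primary} decomposition of $I$ in $R$ (via \cite[Theorem 9]{ts24}) and then localizes; since distinct $S$-prime ideals can collapse to the same prime of $S^{-1}R$ (the paper's Remark \ref{sp}), it must then regroup components with equal radicals using \cite[Lemma 4.3]{fm69} before it has a minimal decomposition in $S^{-1}R$. You instead observe that $S^{-1}R$ is Noetherian (a standard consequence of $S$-Noetherianness, as in \cite{ad02}) and take a minimal primary decomposition directly in $S^{-1}R$, which sidesteps the collapsing issue entirely. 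Your route is cleaner on that point and also handles two edge cases the paper glosses over (the zero ideal, and the possibility that $S^{-1}R$ is a field); the paper's route has the mild advantage of exhibiting the primary factors as localizations of explicit $S$-primary ideals of $R$, which ties the statement back to the $S$-theory. Both hinge on the same input from \cite{ki24} identifying $\dim_S(R)$ with $\dim(S^{-1}R)$, so there is no substantive gap in either direction.
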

	\begin{proof}
		Since $R$ is $S$-Noetherian, $I$ has a minimal $S$-primary decomposition $I=\bigcap\limits_{i=1}^{n}Q_{i}$, by \cite[Theorem 9]{ts24}, where each $Q_{i}$ is $S$-primary and $rad(Q_{i})=P_{i}$. Consequently, $S^{-1}I= \bigcap\limits_{i=1}^{n}S^{-1}Q_{i}$, where each $S^{-1}Q_{i}$ is a primary ideal and $rad(S^{-1}Q_{i})=S^{-1}P_{i}$, by \cite[Remark 11]{ts24}. It is clear from Remark \ref{sp}, all $S^{-1}P_{i}$ are not necessarily distinct. Assume that if $rad(S^{-1}Q_{{i}})$ are equal for $i=1,\ldots, k$, say, $S^{-1}P$, and $rad(S^{-1}Q_{{i}})$ are all distinct for $i=k+1,\ldots, n$, then by \cite[Lemma 4.3]{fm69}, $S^{-1}I'=\bigcap\limits_{j=1}^{k}S^{-1}Q_{{j}}$ is also $S^{-1}P$-primary, so we replace $S^{-1}Q_{{1}}, ~S^{-1}Q_{{2}}, \ldots, S^{-1}Q_{{k}}$ by $S^{-1}I'$ in the decomposition. Now we can guarantee that $S^{-1}P_i\neq S^{-1}P_{j}$ for $i \neq j$ for $i, j\in\{1, 2, \ldots, n\}$ and $ \bigcap_{j\in\{1,2,\ldots,n\}\setminus\{i\}}S^{-1}Q_{j}\nsubseteq S^{-1}Q_{i}$. Thus $S^{-1}I$ represents the minimal primary decomposition. Since $R$ is $S$-domain and $dim_{S}(R)=dim(S^{-1}R)=1$, by \cite[Theorem 4.11]{ki24}, each non-zero prime ideal of $S^{-1}R$ is maximal, hence the $S^{-1}P_{i}$ are distinct maximal ideals, and are therefore pairwise coprime. Hence, by \cite[Proposition 1.16]{fm69} the $S^{-1}Q_{i}$ are pairwise comaximal  and therefore by \cite[Proposition 1.10]{fm69} we have $\prod_{i=1}^{n}S^{-1}Q_{i}=\bigcap\limits_{i=1}^{n}S^{-1}Q_{i}=S^{-1}I$.
		On the other hand, if $S^{-1}I=\prod_{i=1}^{n}S^{-1}Q_{i}$, then the same arguments lead to $S^{-1}I=\bigcap\limits_{i=1}^{n}S^{-1}Q_{i}$; the minimal primary decomposition of $S^{-1}I$, where $S^{-1}Q_{i}$ is an isolated primary component and unique by \cite[Corollary 4.11]{fm69}.
	\end{proof}
	To prove the $S$-version of the Krull intersection theorem, we need the following lemma. 
	\begin{lem}\label{pmsb}
		Let $R$ be an $S$-Noetherian ring and $I$ be an ideal disjoint with $S$. Then there exist $s\in S$ and an integer $m$ such that $s(rad(I))^m\subseteq I$, where $rad(I)=\{x\in R| \ x^n\in I \text{ for some } n\in \mathbb{N}\}$ is the radical ideal of $I$.
	\end{lem}
	\begin{proof}
		Since $rad(I)$ is an $S$-finite, $t\cdot rad(I)\subseteq J\subseteq rad(I)$ for some $t\in S$ and a finitely generated ideal $J=\langle x_1,x_2,\dots,x_l\rangle$ of $R$. Suppose $n_i\in \mathbb{N}$ be such that $x_i^{n_i}\in I$. Take $m=\sum_{i=1}^l (n_i-1)+1$. Then $J^m\subseteq I$. Consequently, $s(rad(I))^m\subseteq J^m\subseteq I$, where $s=t^m$. 
	\end{proof}
	\begin{theorem}(\textbf{$S$-Krull intersection theorem})\label{krull}
		Let $R$ be an $S$-Noetherian domain and $I$ be an ideal of $R$ disjoint from $S$. Let $B=\bigcap\limits_{n=1}^{\infty}I^{n}$. Then there exists $t\in S$ such that $tB=0$.
	\end{theorem}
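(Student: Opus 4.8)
The plan is to push the problem into the localization $S^{-1}R$, apply the classical Krull intersection theorem there, and then use the $S$-Noetherian hypothesis to upgrade the resulting elementwise annihilation to a uniform one. First, $S^{-1}R$ is a Noetherian domain: it is a subring of the field of fractions of $R$, hence a domain, and it is Noetherian because $R$ is $S$-Noetherian --- every ideal of $S^{-1}R$ is the extension of its contraction, and extending an $S$-finite ideal of $R$ makes the multiplier $s\in S$ a unit, leaving a finitely generated ideal (see \cite{ad02}). Moreover $S^{-1}I$ is a \emph{proper} ideal of $S^{-1}R$, since $I\cap S=\emptyset$: an equality $\tfrac{1}{1}=\tfrac{a}{s}$ with $a\in I$, $s\in S$ would give $ua=us\in I\cap S$ for some $u\in S$. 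Hence the classical Krull intersection theorem (see \cite{ad75}) gives $\bigcap_{n\ge 1}(S^{-1}I)^{n}=(0)$ in $S^{-1}R$.

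Now let $b\in B=\bigcap_{n\ge 1}I^{n}$. For every $n$ we have $b\in I^{n}$, so $\tfrac{b}{1}\in S^{-1}(I^{n})=(S^{-1}I)^{n}$; therefore $\tfrac{b}{1}\in\bigcap_{n\ge 1}(S^{-1}I)^{n}=(0)$, which means $u_{b}b=0$ for some $u_{b}\in S$. To make this uniform, I would use that $B$ is an ideal of the $S$-Noetherian ring $R$, hence $S$-finite: there are $s\in S$ and $b_{1},\dots,b_{k}\in B$ with $sB\subseteq\langle b_{1},\dots,b_{k}\rangle\subseteq B$. Choosing $u_{j}\in S$ with $u_{j}b_{j}=0$ and putting $u=u_{1}\cdots u_{k}\in S$, we get $u\langle b_{1},\dots,b_{k}\rangle=0$, so $t:=us\in S$ satisfies $tB\subseteq u(sB)\subseteq u\langle b_{1},\dots,b_{k}\rangle=0$, which is the desired $t$.

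The step carrying the real content is this last one: passing from ``every element of $B$ is killed by some element of $S$'' to ``all of $B$ is killed by a single element of $S$.'' Without a finiteness hypothesis this fails, and it is exactly here that the $S$-Noetherian assumption (the $S$-finiteness of $B$) enters; the rest is routine manipulation with fractions.

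Alternatively, one can argue inside $R$ using the machinery already set up, which is closer to the surrounding exposition. Take an $S$-primary decomposition $IB=\bigcap_{i=1}^{m}Q_{i}$ (note $IB\subseteq I$ is disjoint from $S$), as in \cite{ts24}; applying Lemma \ref{pmsb} to the components with $I\subseteq\operatorname{rad}(Q_{i})$ and the $S$-primary condition to those with $I\not\subseteq\operatorname{rad}(Q_{i})$, together with $B\subseteq I^{N}$ for all $N$, one gets $\sigma B\subseteq IB$ for some $\sigma\in S$ --- the $S$-analogue of the classical identity $B=IB$. Combining with $sB\subseteq\langle b_{1},\dots,b_{k}\rangle=:J$ yields $(s\sigma)b_{i}\in s(IB)\subseteq IJ$ for each $i$, say $(s\sigma)b_{i}=\sum_{l}a_{il}b_{l}$ with $a_{il}\in I$. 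The determinant (Cayley--Hamilton) trick applied to the $k\times k$ matrix with $(i,l)$ entry $(s\sigma)\delta_{il}-a_{il}$ then gives $d\,b_{l}=0$ for all $l$, where $d=(s\sigma)^{k}+c$ with $c\in I$; since $(s\sigma)^{k}\in S$ cannot lie in $I$, we have $d\ne 0$, hence $b_{l}=0$ for all $l$ because $R$ is a domain, so $sB\subseteq J=0$ and $t:=s$ works. Here the main obstacle is establishing $\sigma B\subseteq IB$, as the $S$-primary case analysis must be handled with care against the right notion of radical.
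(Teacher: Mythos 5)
Your main argument is correct, and it takes a genuinely different route from the paper's. The paper works entirely inside $R$: it takes an $S$-primary decomposition of $IB$, uses Lemma \ref{pmsb} together with the $S$-primary condition on each component to produce $u\in S$ with $uB\subseteq IB$, and then invokes the $S$-Nakayama lemma of \cite{ah2020} to get $(t+a)B=0$ with $t\in S$, $a\in I$, whence $tB=0$ because $t+a\neq 0$ and $R$ is a domain. You instead localize: $S^{-1}R$ is a Noetherian domain (Noetherian by the extension--contraction argument from \cite{ad02}, a domain by Proposition \ref{local}), $S^{-1}I$ is proper because $I\cap S=\emptyset$, the classical Krull intersection theorem \cite{ad75} gives $\bigcap_{n}(S^{-1}I)^{n}=(0)$, and the $S$-finiteness of the ideal $B$ upgrades the elementwise annihilators $u_{b}$ to a single $t\in S$. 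Both proofs are complete; yours is shorter, bypasses the $S$-primary machinery and Theorem/Lemma apparatus of Section 4 entirely, and in fact only needs $R$ to be an $S$-domain (matching the hypothesis the paper's own last line actually invokes). Two small remarks. First, under the literal hypothesis that $R$ is an honest domain, $I\cap S=\emptyset$ forces $0\notin S$ (since $0\in I$), so $u_{b}b=0$ already yields $b=0$ and hence $B=(0)$; the uniformization via $S$-finiteness is then not strictly necessary, though it is exactly what keeps the argument valid in the $S$-domain setting. Second, your alternative sketch (decomposition of $IB$, the bound $\sigma B\subseteq IB$, then the determinant trick) is essentially the paper's proof with the appeal to $S$-Nakayama unwound into Cayley--Hamilton, so it adds nothing beyond your first argument and, as you note, carries the only delicate point (the case analysis on the $S$-primary components) that your localization proof avoids.
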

	\begin{proof}
		
		Evidently, $IB\cap S=\emptyset$ because $I\cap S=\emptyset$. Then $IB$ admits $S$-primary decomposition since in $S$-Noetherian, any ideal which is disjoint from $S$ admits an $S$-primary decomposition \cite{st23}. Write $$IB=B_{1}\cap B_{2}\cap\cdots \cap B_{k},$$ where $B_{i}$ $(1\leq i\leq k)$ is $P_{i}$-$S$-primary ideal of $R$, where $P_i=rad(B_i)$ is an $S$-prime. Since $rad(B_{i})$ is $S$-finite, therefore $s_i(rad(B_{i}))\subseteq J_i\subseteq rad(B_{i})$ for some finitely generated ideal $J_i$ of $R$ and $s_i\in S$. 
		
		By Lemma \ref{pmsb}, there exist $s_i$ and integer $m_i$ such that $s_iP_i^{m_i}\subseteq B_i.$ Take $s=\prod_{i=1}^ks_i\in S$ and $m=\text{max}_{1\leq i\leq k}\{m_i\}$. Then $sP_i^m\subseteq B_i$ for all $1\leq i\leq k$.
		Observe that $IB\subseteq B_i$ for all $1\leq i\leq k$. As $B_i$ is $P_i$-$S$-primary, there exists $t_i\in S$ such that either $t_iI\subseteq  P_i$ or $t_iB\subseteq B_i$. If $t_iI\subseteq  P_i$ then $$t_i^msB=t_i^ms\left ( \bigcap_{n=1}^{\infty} I^n\right )\subseteq t_i^msI^m\subseteq sP_i^m\subseteq B_i$$
		that is $t_i'B\subseteq B_i$, where $t_i'=t_i^ms$. Combining both cases ( $t_iI\subseteq  P_i$ or $t_iB\subseteq B_i$), we conclude that there exist $u_i\in S$ such that $u_iB\subseteq B_i$ (where $u_i=t_i'$ when $t_iI\subseteq P_i$ and $u_i=t_i$ when $t_iB\subseteq B_i$). Put $u=\prod_{1\leq i\leq k} u_i\in S$, then $uB\subseteq B_i$ for all $1\leq i\leq k$. Consequently, we get $uB\subseteq IB$. By $S$-Nakayama's lemma \cite[Lemma 2.1]{ah2020}, there exist $t\in S$ and $a\in I$ such that $(t+a)B=0$. Hence, $(t+a)b=0$ for all $b\in B$. Since $R$ is $S$-domain and $I\cap S=\emptyset$, therefore $tB=0$. This completes the proof.
\end{proof} 
\begin{cor}
	Let $R$ be an $S$-Noetherian domain and $I$ is an ideal of $R$ disjoint from $S$. If $I\subseteq J(R)$, where $J(R)$ is the Jacobson radical of $R$, then there exist $s\in S$ and $a\in I$ such that $(s+a)\bigcap\limits_{n=1}^{\infty}I^{n}=0$.
\end{cor}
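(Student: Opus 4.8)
The plan is to follow the proof of Theorem~\ref{krull} up to the point where the domain hypothesis is first invoked, and then simply read off the conclusion. Put $B=\bigcap_{n=1}^{\infty}I^{n}$. First I would observe, exactly as in that proof, that $IB\cap S=\emptyset$ (because $I\cap S=\emptyset$), so $IB$ admits an $S$-primary decomposition $IB=B_{1}\cap\cdots\cap B_{k}$ with each $B_{i}$ a $P_{i}$-$S$-primary ideal, where $P_{i}=rad(B_{i})$ is $S$-prime. Then Lemma~\ref{pmsb} gives $s_{i}\in S$ and $m_{i}\in\mathbb{N}$ with $s_{i}P_{i}^{m_{i}}\subseteq B_{i}$, and combining this with the $S$-primary property of each $B_{i}$ and the inclusions $IB\subseteq B_{i}$ produces some $u\in S$ with $uB\subseteq B_{i}$ for every $i$, hence $uB\subseteq IB$. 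None of this uses that $R$ is a domain.

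Since $R$ is $S$-Noetherian, $B$ is an $S$-finite $R$-module, so I would next apply the $S$-Nakayama lemma \cite[Lemma 2.1]{ah2020} to the inclusion $uB\subseteq IB$, obtaining $s\in S$ and $a\in I$ with $(s+a)B=0$; as $I\subseteq J(R)$, this $a$ lies in the Jacobson radical, which is the classical hypothesis being recorded in the statement. Rewriting $(s+a)B=0$ as $(s+a)\bigcap_{n=1}^{\infty}I^{n}=0$ completes the argument. Equivalently, and more briefly, Theorem~\ref{krull} already furnishes $t\in S$ with $tB=0$, so the choice $s=t$ and $a=0\in I\subseteq J(R)$ works at once; the longer route is preferable only because it exhibits the element $a\in I$ explicitly and matches the shape of the classical corollary.

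I do not anticipate a genuine obstacle here: every tool is already available — existence of an $S$-primary decomposition in an $S$-Noetherian ring \cite{st23}, Lemma~\ref{pmsb}, and $S$-Nakayama \cite[Lemma 2.1]{ah2020} — and the statement is essentially the penultimate line in the proof of Theorem~\ref{krull}. The only points requiring care are checking that $B$ is $S$-finite (immediate from $S$-Noetherianness) and that the inclusion fed into $S$-Nakayama has the required form $uB\subseteq IB$ with $u\in S$, which is exactly what the first paragraph secures.
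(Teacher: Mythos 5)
Your argument is correct and is essentially the paper's own proof: the paper likewise extracts the inclusion $uB\subseteq IB$ from (the proof of) Theorem \ref{krull} and then applies the $S$-Nakayama lemma of \cite{ah2020} to the $S$-finite ideal $B$ to produce $s\in S$ and $a\in I$ with $(s+a)B=0$. Your observation that the conclusion also follows trivially from the theorem's final statement by taking $a=0$ is a fair remark, but the route you develop in detail matches the paper's.
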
  

\begin{proof}
	
	Let $B=\bigcap\limits_{n=1}^{\infty}I^{n}$. Then there exists $u\in S$ such that $uB\subseteq IB$, by Theorem \ref{krull}. Using \cite[Remark 2.1]{ah2020}, $(s+a)\bigcap\limits_{n=1}^{\infty}I^{n}=0$ for some $s\in S$ and $a\in I$.
\end{proof} 
\begin{cor}
	
	If $R$ is an $S$-Noetherian domain, then there exist $s\in S$ and $a\in J_{S}(R)$ such that  $(s+a)\bigcap\limits_{n=1}^{\infty}(J_{S}(R))^{n}=0$, where $J_{S}(R)$ is the $S$-Jacobson radical of $R$.
\end{cor}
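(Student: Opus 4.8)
The plan is to apply the $S$-Krull intersection theorem (Theorem \ref{krull}) directly to the ideal $I=J_{S}(R)$. The only hypothesis of Theorem \ref{krull} beyond $R$ being an $S$-Noetherian domain is that $I$ be disjoint from $S$, so the first task is to check that $J_{S}(R)\cap S=\emptyset$. Since $R$ is a domain we may assume $0\notin S$ (otherwise $S^{-1}R$ is the zero ring and there is nothing to prove), so $(0)$ is an ideal disjoint from $S$; hence $R$ possesses $S$-maximal ideals and $J_{S}(R)$, the intersection of all of them, is a well-defined ideal of $R$. Each $S$-maximal ideal is disjoint from $S$ by Definition \ref{smax}, and $J_{S}(R)$ is contained in each of them, whence $J_{S}(R)\cap S=\emptyset$.

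Next I would note that $R$ being $S$-Noetherian forces $J_{S}(R)$, and therefore $B:=\bigcap\limits_{n=1}^{\infty}(J_{S}(R))^{n}$, to be $S$-finite. Now Theorem \ref{krull} applies verbatim with $I=J_{S}(R)$: there exists $t\in S$ with $tB=0$. Since $0\in J_{S}(R)$, taking $s=t$ and $a=0$ yields $(s+a)\bigcap\limits_{n=1}^{\infty}(J_{S}(R))^{n}=0$, which is the assertion.

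To obtain the conclusion in the sharper form where $a$ is exhibited as a genuine element of $J_{S}(R)$ (paralleling the classical Krull/Nakayama conclusion and the statement of the preceding corollary), I would instead run the argument in the proof of Theorem \ref{krull} up to the point where it produces $u\in S$ with $uB\subseteq (J_{S}(R))\,B$, and then invoke the $S$-Nakayama lemma of \cite{ah2020} — exactly as the preceding corollary uses \cite[Remark 2.1]{ah2020} — applied to the $S$-finite module $B$ and the ideal $J_{S}(R)$, to extract $s\in S$ and $a\in J_{S}(R)$ with $(s+a)B=0$.

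The main obstacle is bookkeeping rather than anything substantive: one must confirm that $J_{S}(R)$ is indeed disjoint from $S$ (so that Theorem \ref{krull} is legitimately applicable) and, for the refined version, that the cited $S$-Nakayama-type statement is available with the $S$-Jacobson radical $J_{S}(R)$ playing the role normally taken by the classical Jacobson radical; note in particular that one should not route through the previous corollary, since $J_{S}(R)\subseteq J(R)$ need not hold. Everything else is an immediate specialization of Theorem \ref{krull}.
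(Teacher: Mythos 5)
Your proposal is correct; note that the paper itself states this corollary without proof, so the comparison is with the implicit intended argument (the same pattern as the preceding corollary: Theorem \ref{krull} plus the $S$-Nakayama lemma of \cite{ah2020}). Your two routes both work. The first observation --- that once $J_{S}(R)\cap S=\emptyset$ is verified, Theorem \ref{krull} gives $tB=0$ for some $t\in S$, and then $s=t$, $a=0\in J_{S}(R)$ already satisfies the stated conclusion --- shows the corollary as literally written is strictly weaker than the theorem it follows from. The only genuine content is the disjointness check, and your argument for it is sound: when $0\notin S$ an ideal maximal among ideals disjoint from $S$ exists by Zorn's lemma and is $S$-maximal in the sense of Definition \ref{smax}, so $J_{S}(R)$ sits inside an ideal disjoint from $S$. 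Your refined version (stopping at $uB\subseteq J_{S}(R)B$ and invoking $S$-Nakayama to get $a\in J_{S}(R)$ honestly) is exactly what the preceding corollary does with $J(R)$, and your caveat that one cannot simply cite that corollary because $J_{S}(R)\subseteq J(R)$ may fail is a correct and worthwhile point, since $S$-maximal ideals need not be maximal and maximal ideals meeting $S$ are excluded from the intersection defining $J_{S}(R)$.
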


	\begin{section}{Conclusion}
		In this paper, we have generalized several results on integral domain and field to $S$-integral domain and $S$-field respectively. We discussed several characterizations of $S$-Noetherian ring with the help of $S$-domains. We also presented the $S$-version of the Krull intersection theorem.
		
		A defining property of a field is that every non-zero element has a unique multiplicative inverse, and vector spaces are traditionally defined over fields. By extending the concept of vector spaces to $S$-vector spaces (i.e., vector spaces over $S$-fields), we can introduce a new framework for analyzing and exploring algebraic structures in commutative algebra. This generalization also opens up new avenues for research in coding theory, cryptography, and other areas where vector spaces and field theory are fundamental. In light of this, we propose the following questions:
		\begin{question}
			How can we define a unique multiplicative inverse of an $S$-non-zero element in a ring with respect to $S$ (say, $S$-inverse)? Furthermore, how can we show that every $S$-non-zero element in an $S$-field possesses an $S$-inverse? This definition must ensure that when the $S$-field is replaced by a conventional field, the $S$-inverse coincides with the usual multiplicative inverse.
		\end{question}
		\begin{question}
			Is it possible to extend the vector space structure over the $S$-field? 
		\end{question}
	\end{section}

    \section*{Acknowledgement}
    The authors express their sincere gratitude to the anonymous reviewers and the editor for their valuable comments and constructive suggestions, which have significantly improved the quality of this work.  G. K. Verma gratefully acknowledges the financial support provided by  UAEU-AUA grant $G00004614$.  Part of this research was carried out while G. K. Verma was affiliated with the Department of Mathematics, Indian Institute of Technology Delhi.

\end{document}